\newtheorem{thm}{Theorem}
\newtheorem{lem}{Lemma}
\newtheorem{prop}{Proposition}
\newtheorem{cor}{Corollary}
\newtheorem*{thm*}{Theorem}
\newtheorem*{prop*}{Proposition}
\newtheorem*{cor*}{Corollary}
\newtheorem*{conj*}{Conjecture}
\theoremstyle{definition}
\theoremstyle{definition}
\newtheorem*{rmk*}{Remark}
\theoremstyle{definition}
\newcommand{\ZZ}{\mathbb Z}
\newcommand{\NN}{\mathbb N}
\newcommand{\RR}{\mathbb R}
\newcommand{\QQ}{\mathbb Q}
\newcommand{\CC}{\mathbb C}
\newcommand{\HH}{\mathbb H}
\def\tr{\mathop{\mathrm{tr}}\nolimits}
\begin{document}

\author{Hanno von Bodecker\footnote{Fakult{\"a}t f{\"u}r Mathematik, Universit{\"a}t Bielefeld, Germany}}

\title{An analytical formula for the $f$--invariant of circle transfers}

\date{}

\maketitle
\begin{abstract}
In this note, we explain how the $f$--invariant of a circle transfer can be computed on the framed manifold itself in terms of the spectral asymmetry of twisted Dirac operators on the base. Some explicit examples and a treatment of the quaternionic case are provided as well.
\end{abstract}

\section{Introduction}
Let $\lambda$ be a hermitian line bundle over a closed framed manifold $M$. Geometrically, the circle transfer takes $M$  to the sphere bundle $S(\lambda)$ with the induced framing; passing to bordism classes, this gives rise to a morphism of $\Omega^{fr}_{*}$--modules \cite{Loffler:1974qf}:
\begin{equation}\label{circletransfer}
S_{\CC}\colon\Omega^{fr}_{*}\left(\CC P^{\infty}\right)\rightarrow\Omega^{fr}_{*+1}.
\end{equation}
Knapp has shown that the circle transfer \eqref{circletransfer} is equivalent to considering the bistable complex $J$--homomorphism (up to sign), hence circle transfers may be used to represent the bordism classes of unitarily reframed boundaries; unfortunately, the bistable complex $J$--homomorphism is  not surjective (not even away from the prime two) \cite{Knapp:1979bh}.

As a first step towards identifying the framed bordism class of a given circle transfer, L\"offler and Smith have shown how to determine Adams' complex $e$--invariant of such a transfer: The associated disk bundle naturally carries the structure of $(U,fr)$--manifold (see \cite{Conner:1966jw} for a definition), and  its relative Todd genus can then be computed on $M$ itself using the Thom isomorphism \cite{Loffler:1974qf}.  Now suppose that $M$ is of dimension $2l-1$; then $e_{\CC}[S(\lambda)]=0$ and the next step would be to determine Laures' $f$--invariant \cite{Laures:1999sh,Laures:2000bs},
\begin{equation}\label{f}
f\colon\pi^{S}_{2l}S^{0}\rightarrow\underline{\underline{D}}^{\Gamma}_{l+1}\otimes{\mathbb{Q/Z}}.
\end{equation}
To this end, note that $\Omega^{U}_{2l-1}\left(\CC P^{\infty}\right)=0$; thus, the circle bundle $S(\lambda)$ can be extended over some $(U,fr)$--manifold $B$. Promoting the associated disk bundle to a $(U,fr)^{2}$--manifold, $f[S(\lambda)]$ could be computed by evaluating certain relative cohomology classes. In practice however, such a $(U,fr)$--manifold $B$ is hard to find, so an intrinsic  description, i.e.\ one computable in terms of the original datum, is desirable.

In \cite{Atiyah:1975ai}, Atiyah, Patodi, and Singer have shown how  the (real) $e$--invariant of a closed parallelized manifold (of dimension $4k-1$) can be computed on the manifold itself: First of all, a parallelism determines a Riemannian metric, so besides the tautological flat connection annihilating the frame, there is the Levi-Civita connection; in particular, this can be used to define a secondary characteristic form associated to the $\hat A$--genus, which is essentially the invariant considered by Chern and Simons \cite{Chern:1974qo}. Furthermore, the parallelism determines a spin structure, so there is a canonical Riemannian Dirac operator $\eth$; its spectral asymmetry is encoded in the quantity $(\eta(\eth)+\dim\ker(\eth))/2$ that occurs as a boundary correction in the APS--index theorem. Thus, modulo the integers, the relative $\hat A$--genus is determined by the combination of these two quantities.

The purpose of this note is to establish a similar description of the $f$--invariant for the circle transfer (and  also for its quaternionic analog); it is organized as follows: The first step is to establish a suitable   expansion of the Hirzebruch elliptic genus. Then, we derive an intrinsic formula for the $f$--invariant of the circle transfer on a closed parallelized manifold $M$; it involves the $\eta$--invariants of twisted Dirac operators, where the twisting runs over all non-trivial powers of the line bundle $\lambda$, but we refer to Section \ref{complex case} for the precise statement. 
In Section \ref{quaternionic case}, we deal with the quaternionic case. To illustrate our results,  we treat several examples in Section \ref{some examples}. Finally, a brief discussion of the modifications required in the case of a merely stably parallelized manifold (followed by an explicit example) is included as an appendix.


\section{An expansion of the elliptic genus}\label{an expansion}
The first thing to set up is a power expansion of the Hirzebruch elliptic genus of level $N>1$; our approach is  along the lines of Zagier's work  \cite{Zagier:1988jy}:
\begin{prop}\label{bernoulli}
The power series associated to the Hirzebruch elliptic genus of level $N$ is given by
\begin{equation*}
Ell^{\Gamma_{1}(N)}(x)=1+\sum_{k\geq1}\widehat{G}_{k}^{(N)}(\tau)\frac{x^{k}}{(k-1)!}
\end{equation*}
with coefficients $q$--expanding as
\begin{equation*}
\widehat{G}_{k}^{(N)}(\tau)=c_{k}-\sum_{n\geq1}\left(\sum_{d|n}\left(\zeta^{-n/d}+(-1)^{k}\zeta^{n/d}\right)d^{k-1}\right)q^{n}
\end{equation*}
where $\zeta=\exp(2\pi i/N)$, $c_{1}=\frac{1}{2}+\frac{\zeta}{1-\zeta}$, and $c_{k}=B_{k}/k$ for $k>1$.
\end{prop}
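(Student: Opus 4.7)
The plan is to adapt Zagier's method \cite{Zagier:1988jy} to the level-$N$ setting. Write the characteristic series in its Jacobi-theta presentation,
$$Ell^{\Gamma_1(N)}(x,\tau) \;=\; x\cdot\frac{\theta_1'(0,\tau)\,\theta_1(x-z,\tau)}{\theta_1(x,\tau)\,\theta_1(-z,\tau)},$$
where $z$ is the appropriate $N$-torsion point on $\CC/(\ZZ+\tau\ZZ)$ and $\zeta=e^{2\pi iz}$. The Jacobi triple product then rewrites this as a product of a $q$-independent ``cusp'' factor --- essentially $(x/2)\coth(x/2)$ together with a $\zeta$-dependent linear correction coming from $\pi u \cot(\pi z)$ in the additive variable $u=x/(2\pi i)$ --- and an infinite product over $n\ge 1$ of factors involving $q^n$, $e^{\pm x}$ and $\zeta^{\pm 1}$.

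The $q$-dependent contribution is extracted via the Kronecker--Eisenstein expansion of the theta quotient. Concretely, one expands each factor using $\log(1-u)=-\sum_{m\ge 1}u^m/m$ followed by $e^{\pm mx}=\sum_{k\ge 0}(\pm mx)^k/k!$ and re-indexes $n=jm,\ d=m$ (with $j$ the outer product index); the resulting double Lambert sum collapses into the divisor expression $\sum_{n\ge 1}q^n\sum_{d\mid n}d^{k-1}\zeta^{\pm n/d}$. Summing the two sign choices yields the symmetric factor $\zeta^{-n/d}+(-1)^k\zeta^{n/d}$ in the proposition. For the cusp factor, the Bernoulli-number expansion of $(x/2)\coth(x/2)$ produces $c_k=B_k/k$ for $k>1$, while the identity $\pi\cot(\pi z)=i(1+\zeta)/(1-\zeta)$ combined with the overall $\zeta$-dependent normalization produces the anomalous $c_1=\tfrac12+\zeta/(1-\zeta)$.

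The most delicate point --- which I expect to be the main obstacle --- is the passage from the \emph{logarithmic} expansion of each product factor to the \emph{direct} Taylor expansion $1+\sum \hat G_k^{(N)}\,x^k/(k-1)!$ of $Ell^{\Gamma_1(N)}$. Naively, $\exp$ of the logarithmic expansion would introduce polynomial cross-terms mixing Eisenstein-type series of different weights; the assertion of the proposition is that for this specific characteristic series all such cross-terms collapse into the single series $\hat G_k^{(N)}$ in each degree. The cleanest way to handle this is probably to organize the computation around a direct Kronecker--Eisenstein identity for the theta quotient, bypassing the $\log$-then-$\exp$ maneuver entirely; the unusual normalization $x^k/(k-1)!$ (rather than $x^k/k!$) then arises naturally from the absorption of one power of the Lambert-index $m$ by the quasi-periodicity shift built into that identity.

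Finally, a convention-checking step will be needed: the $q^{1/8}$ prefactor and the quasi-periodicity of $\theta_1$ both affect the linear ($k=1$) coefficient, and pinning these down so that exactly $c_1=1/2+\zeta/(1-\zeta)$ is reproduced (rather than any naive Bernoulli-type value) is where the numerical bookkeeping is most error-prone; once it is fixed, the remaining matching of $q$-coefficients is routine.
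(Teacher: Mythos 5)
Your proposal correctly predicts the shape of the answer and, to your credit, correctly diagnoses the central difficulty: the product (Jacobi triple product / log-then-exp) route computes the \emph{logarithm} of the theta quotient, and exponentiating it produces cross-terms between Eisenstein series of different weights, so it cannot directly yield the clean single-series coefficients $\widehat{G}_k^{(N)}$. But having diagnosed this, you only gesture at the fix (``organize the computation around a direct Kronecker--Eisenstein identity \ldots bypassing the log-then-exp maneuver'') without proving that identity --- and that identity \emph{is} the entire content of the proof. Concretely, what must be established is the additive (Lambert-series) presentation
\begin{equation*}
\frac{Ell^{\Gamma_1(N)}(x)}{x}=\frac12+\frac{\zeta}{1-\zeta}+\frac12\coth\!\left(\frac{x}{2}\right)+\sum_{n\geq1}\left(\zeta^{n}\frac{q^{n}e^{-x}}{1-q^{n}e^{-x}}-\zeta^{-n}\frac{q^{n}e^{x}}{1-q^{n}e^{x}}\right),
\end{equation*}
after which the term-by-term expansion of $\coth$ (Bernoulli numbers) and of the geometric/exponential series (divisor sums with the $\zeta^{\pm n/d}$ weights) is the routine part you describe. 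As written, your argument asserts this identity rather than deriving it.

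The paper closes exactly this gap by a Liouville-type argument: call the right-hand side $\psi(x)+c_1$ and verify by direct manipulation of the series that $\psi$ is invariant under $x\mapsto x+2\pi i$ and transforms under $x\mapsto x\pm 2\pi i l\tau$ so as to be doubly periodic with respect to the sublattice $2\pi i(\ZZ+N\tau\ZZ)$, with only simple poles on $2\pi i(\ZZ+\tau\ZZ)$. The quasi-periodicity of the $\Phi$-function shows that $Ell^{\Gamma_1(N)}(x)/x$ is elliptic for the same sublattice with the same poles and residues, so the two differ by a constant; that constant is pinned down by computing the degree-one coefficient (via the logarithmic derivative --- the only place a log-expansion is actually needed), which is precisely where the anomalous $c_1=\tfrac12+\zeta/(1-\zeta)$ comes from. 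If you want to complete your proposal, you should either carry out this periodicity-and-residue comparison or replace it with an explicit citation of the Kronecker--Eisenstein expansion of $\theta_1'(0)\theta_1(x-z)/(\theta_1(x)\theta_1(-z))$ in the precise normalization you are using; without one of these, the step from the theta quotient to the divisor-sum coefficients is not justified.
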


\begin{proof} 
For $x\in\CC$ and fixed $\tau\in\mathfrak{h}$ consider the series
\begin{equation}\label{near zero}
\psi(x)=\frac{1}{2}\coth\left(\frac{x}{2}\right)+\sum_{n\geq1}\zeta^{n}\frac{q^{n}e^{-x}}{1-q^{n}e^{-x}}-\sum_{n\geq1}\zeta^{-n}\frac{q^{n}e^{x}}{1-q^{n}e^{x}}
\end{equation} 
which is absolutely convergent for $|q|<\textrm{min}\{|e^{x}|,|e^{-x}|\}$. Now, for any integer $l>0$, we have
\begin{equation}\label{positive shift}
\begin{split}
&\quad\ \psi^{}(x+2\pi il\tau)\\
&=\frac{1}{2}\left(\frac{q^{l}e^{-x}+1}{q^{l}e^{-x}-1}\right)+\sum_{n\geq1}\zeta^{n}\frac{q^{n-l}e^{-x}}{1-q^{n-l}e^{-x}}-\sum_{n\geq1}\zeta^{-n}\frac{q^{n+l}e^{x}}{1-q^{n+l}e^{x}}\\
&=-\frac{1}{2}-\frac{q^{l}e^{x}}{1-q^{l}e^{x}}+\sum_{n\geq l+1}\zeta^{n}\frac{q^{n-l}e^{-x}}{1-q^{n-l}e^{-x}}\\
&\quad\ +\zeta^{l}\frac{e^{-x}}{1-e^{-x}}+\sum_{n=1}^{l-1}\zeta^{n}\frac{q^{n-l}e^{-x}}{1-q^{n-l}e^{-x}}-\sum_{n\geq1}\zeta^{-n}\frac{q^{n+l}e^{x}}{1-q^{n+l}e^{x}}\\
&=\zeta^{l}\frac{1}{e^{x}-1}-\frac{1}{2}-\sum_{n=1}^{l-1}\zeta^{n}+\sum_{n\geq l+1}\zeta^{n}\frac{q^{n-l}e^{-x}}{1-q^{n-l}e^{-x}}\\
&\quad\ -\zeta^{0}\frac{q^{l}e^{x}}{1-q^{l}e^{x}}-\sum_{n=1}^{l-1}\zeta^{-(-n)}\frac{q^{l-n}e^{x}}{1-q^{l-n}e^{x}}-\sum_{n\geq1}\zeta^{-n}\frac{q^{l+n}e^{x}}{1-q^{l+n}e^{x}}\\
\end{split}
\end{equation}
and
\begin{equation}\label{negative shift}
\begin{split}
&\quad\ \psi(x-2\pi il\tau)\\
&=\frac{1}{2}\left(\frac{1+q^{l}e^{-x}}{1-q^{l}e^{-x}}\right)+\sum_{n\geq1}\zeta^{n}\frac{q^{n+l}e^{-x}}{1-q^{n+l}e^{-x}}-\sum_{n\geq1}\zeta^{-n}\frac{q^{n-l}e^{x}}{1-q^{n-l}e^{x}}\\
&=\zeta^{-l}\frac{e^{x}}{e^{x}-1}+\frac{1}{2}+\sum_{n=1}^{l-1}\zeta^{-n}-\sum_{n-l\geq 1}\zeta^{-n}\frac{q^{n-l}e^{x}}{1-q^{n-l}e^{x}}\\
&\quad+\zeta^{0}\frac{q^{l}e^{-x}}{1-q^{l}e^{-x}}+\sum_{n=1}^{l-1}\zeta^{(-n)}\frac{q^{l-n}e^{-x}}{1-q^{l-n}e^{-x}}+\sum_{n\geq1}\zeta^{n}\frac{q^{l+n}e^{-x}}{1-q^{l+n}e^{-x}}\\
\end{split}
\end{equation}
Clearly, the expressions \eqref{near zero}, \eqref{positive shift}, and \eqref{negative shift} remain unchanged if we replace $x$ by $x+2\pi i k$ for  $k\in\ZZ$, so $\psi(x)$ extends to  a well-defined meromorphic function on $\CC$. Moreover, $\psi(x)$ has only simple poles, which lie on the lattice $2\pi i(\ZZ+\tau\ZZ)$, and  is easily seen to be doubly periodic w.r.t.\ the sublattice $2\pi i(\ZZ+N\tau\ZZ)$.
On the other hand, the $\Phi$--function transforms as follows  (see e.g.\ \cite{Hirzebruch:1992dw})
\begin{equation*}
\Phi\left(\tau,x+2\pi i(\lambda\tau+\mu)\right)=q^{\lambda^{2}/2}e^{-\lambda x}(-1)^{\lambda+\mu}\Phi(\tau,x)\quad \forall\lambda,\mu\in\ZZ,
\end{equation*}
implying that
\begin{equation*}
\frac{Ell^{\Gamma_1\left(N\right)}\left(x\right)}{x}=\frac{\Phi(\tau,x-2\pi i/N)}{\Phi(\tau,x)\Phi(\tau,-2\pi i/N)}
\end{equation*}
is also doubly periodic w.r.t.\ $2\pi i(\ZZ+N\tau\ZZ)$ and has the same poles and residues as $\psi(x)$, hence these two functions must agree up to a (possibly $\tau$--dependent) constant; considering the logarithmic derivative, one easily deduces
\begin{equation*}
Ell^{\Gamma_{1}(N)}(x)=1+\left(\frac{1}{2}+\frac{\zeta}{1-\zeta}+\sum_{m\geq1}\left(\sum_{d|m}2\sinh\left(\frac{2\pi id}{N}\right)\right)q^{m}\right)x+O(x^{2}),
\end{equation*}
so expanding
\begin{equation}\label{qexp near zero}
\begin{split}
\psi(x)&=\frac{1}{2}\coth\left(\frac{x}{2}\right)+\sum_{n\geq1}\left(\zeta^{n}\frac{q^{n}e^{-x}}{1-q^{n}e^{-x}}-\zeta^{-n}\frac{q^{n}e^{x}}{1-q^{n}e^{x}}\right)\\
&=\frac{1}{2}\coth\left(\frac{x}{2}\right)+\sum_{n\geq1}\left(\zeta^{n}\sum_{k\geq1}q^{kn}e^{-kx}-\zeta^{-n}\sum_{k\geq1}q^{kn}e^{kx}\right)\\
&=\frac{1}{2}\coth\left(\frac{x}{2}\right)+\sum_{m\geq1}\left(\sum_{d|m}\left(\zeta^{m/d}e^{-dx}-\zeta^{-m/d}e^{dx}\right)\right)q^{m}
\end{split}
\end{equation}
w.r.t.\ $x$ implies $\psi(x)+\textstyle{\frac{1}{2}+\frac{\zeta}{1-\zeta}}=Ell^{\Gamma_{1}(N)}(x)/x$ and yields the desired result.
\end{proof}
Working at a fixed level $N>1$, we are going to suppress the subscript $\Gamma_{1}(N)$ from the notation if confusion is unlikely; furthermore, we are going to use the abbreviations
\begin{equation*}
Ell_{0}=Ell|_{q=0},\quad \widetilde{Ell}=Ell-Ell_0.
\end{equation*}

An interesting observation concerning Proposition \ref{bernoulli} is:
\begin{prop}\label{reduced genus}
For a complex line bundle $\lambda$, we have
\begin{equation}
\frac{\widetilde{Ell}^{\Gamma_{1}(N)}(\lambda)}{c_{1}(\lambda)}=-\sum_{n\geq1}\left(\sum_{d|n}\left(\zeta^{-n/d}ch(\lambda^{d})-\zeta^{n/d}ch(\lambda^{-d})\right)\right)q^{n}.
\end{equation}
\end{prop}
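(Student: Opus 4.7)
The plan is to derive the identity directly from Proposition \ref{bernoulli} by splitting off the $q$-independent part, dividing by $x$, and then interchanging the summations over $k$ and $n$ so that the inner $k$-sum becomes a pair of exponential series.

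More concretely, first I would note that $Ell_{0}(x)$ is precisely the $q$-independent part of the expansion, i.e.\ $Ell_{0}(x)=1+\sum_{k\geq 1}c_{k}\,x^{k}/(k-1)!$, so that Proposition \ref{bernoulli} gives
\begin{equation*}
\widetilde{Ell}(x)=-\sum_{k\geq 1}\sum_{n\geq 1}\left(\sum_{d\mid n}\bigl(\zeta^{-n/d}+(-1)^{k}\zeta^{n/d}\bigr)d^{k-1}\right)q^{n}\frac{x^{k}}{(k-1)!}.
\end{equation*}
Dividing by $x$ and relabelling $j=k-1$ absorbs one factor of $d$ and one of $x$ into a combined factor $(dx)^{j}/j!$, so that after exchanging the (absolutely convergent) summations the inner sum over $k$ collapses to
\begin{equation*}
\sum_{k\geq 1}\bigl(\zeta^{-n/d}+(-1)^{k}\zeta^{n/d}\bigr)\frac{(dx)^{k-1}}{(k-1)!}=\zeta^{-n/d}e^{dx}-\zeta^{n/d}e^{-dx},
\end{equation*}
where the sign in the second term comes from $(-1)^{k}=-(-1)^{k-1}$.

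Finally, for a complex line bundle $\lambda$ with first Chern class $x=c_{1}(\lambda)$ one has $c_{1}(\lambda^{\pm d})=\pm d\, c_{1}(\lambda)$ and hence $e^{\pm d\,c_{1}(\lambda)}=ch(\lambda^{\pm d})$; substituting this into the previous display yields the claimed formula. The only step that requires any care is the interchange of summation orders, but since we are working formally in $q$ (coefficient-wise) the inner sum over $k$ is finite after truncation in the degree of $x$, so no analytic subtlety arises; everything else is bookkeeping.
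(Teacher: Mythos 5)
Your proof is correct and is essentially the paper's argument run in reverse: the paper reads the identity off the intermediate display \eqref{qexp near zero}, where $\psi(x)$ is written with the exponentials $e^{\pm dx}$ still intact, whereas you reconstitute those exponentials from the stated coefficients $\widehat{G}^{(N)}_{k}$ by resumming $\sum_{k\geq1}d^{k-1}x^{k-1}/(k-1)!=e^{dx}$ (with the sign flip for the $(-1)^{k}$ term). The bookkeeping, the interchange of summations (harmless since $c_{1}(\lambda)$ is nilpotent, so the $k$-sum is finite), and the substitution $ch(\lambda^{\pm d})=e^{\pm d\,c_{1}(\lambda)}$ are all handled correctly.
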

\begin{proof} This follows immediately from \eqref{qexp near zero}.
\end{proof}


\section{The complex transfer}\label{complex case}
Regarding the definition of the $f$--invariant, let us briefly recall some notation from \cite{Laures:1999sh,Laures:2000bs}: Considering the congruence subgroup $\Gamma=\Gamma_1(N)$ for a fixed level $N$, let $\mathbb{Z}^{\Gamma}=\mathbb{Z}[\zeta_N,1/N]$ and denote by $M^{\Gamma}_*$ the graded ring of modular forms w.r.t.~$\Gamma$ which expand integrally, i.e.\ which lie in $\mathbb{Z}^{\Gamma}[\![q]\!]$. The ring of  divided congruences $D^{\Gamma}$  consists of those rational combinations of modular forms which expand integrally; this ring can be filtered by setting
\begin{equation*}
D_k^{\Gamma}=\left\{\left.f={\textstyle{\sum_{i=0}^{k}}}f_i\ \right| f_i\in M_i^{\Gamma}\otimes\mathbb{Q},\ f\in\mathbb{Z}^{\Gamma}[\![q]\!]\right\}.
\end{equation*}
Furthermore, put
\begin{equation*}\underline{\underline{D}}^{\Gamma}_{k}=D^{\Gamma}_k+M_0^{\Gamma}\otimes\mathbb{Q}+M_k^{\Gamma}\otimes\mathbb{Q}.
\end{equation*}
For our purposes, a $(U,fr)^{2}$--manifold is just a stably almost complex $\langle2\rangle$--manifold $Z$  together with a stable decomposition $TZ\cong E_{1}\oplus E_{2}$ (in terms of complex vector bundles)  and specified trivializations of  the restriction of the $E_{i}$ to the respective face $\partial_{i}Z$ for $i=1,2$. Now let $Z$ be $2l+2$--dimensional; then we may consider the following evaluation of relative characteristic classes
\[F(Z)=\left\langle \widetilde{Ell}(E_{1})\left(Ell_{0}(E_{2})-1\right),[Z,\partial Z]\right\rangle;\]
clearly, this yields a rational combination of modular forms of weight $\leq l+1$, i.e.\ an element of $D_{l+1}^{\Gamma}\otimes\QQ$. It turns out that the class
\begin{equation}\label{f as a class}
\left[F(Z)\right]\in \underline{\underline{D}}^{\Gamma}_{l+1}\otimes\QQ/\ZZ
\end{equation}
is a well-defined invariant of the framed bordism class of the codimension-two corner of $Z$. In this note, we adopt the sign convention that the corner is to be considered as the boundary of the face $\partial_{2}Z$ (where, as usual,  boundaries inherit their orientation using the ``outward normal first'' convention) and that its $f$--invariant (at the level $N$) is then defined by \eqref{f as a class}; moreover, any fiber bundle (in particular $S(\lambda)$) will be oriented ``base first''.

Next, we note the following two results:
\begin{lem}\label{integrality}
Let $B$ be an even-dimensional $(U,fr)$--manifold. Then, for any complex vector bundle $E$ over B, we have:
\begin{equation*}
\left\langle\left(Ell_{0}(TB)-Td(TB)\right)ch(E),[B,\partial B]\right\rangle\in\ZZ^{\Gamma}.
\end{equation*}
\end{lem}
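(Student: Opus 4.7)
The plan is to reduce the integrand to a multiple of the relative Todd class of a virtual bundle which vanishes in $K$-theory on the boundary; the APS index theorem will then force the pairing into $\ZZ^{\Gamma}$.

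First, an algebraic identity. Reading off Proposition~\ref{bernoulli}, for $k\geq 2$ the $x^{k}$-coefficient of $Ell_{0}(x)$ is $c_{k}/(k-1)!=B_{k}/k!$, which coincides with the $x^{k}$-coefficient of $Td(x)=x/(1-e^{-x})$ (both vanish for odd $k\geq 3$, since $B_{k}=0$). The only discrepancy is in degree one, by $\zeta/(1-\zeta)$, so
\begin{equation*}
Ell_{0}(x)=Td(x)+\frac{\zeta}{1-\zeta}\,x=Td(x)\cdot\frac{1-\zeta e^{-x}}{1-\zeta}.
\end{equation*}
Applying this multiplicatively to the Chern roots of $TB$ (of stable complex rank $n$) and using $\prod_{i}(1-\zeta e^{-x_{i}})=ch(\Lambda_{-\zeta}T^{*}B)$, I would obtain
\begin{equation*}
Ell_{0}(TB)=\frac{Td(TB)\,ch\!\left(\Lambda_{-\zeta}T^{*}B\right)}{(1-\zeta)^{n}},\qquad\Lambda_{-\zeta}T^{*}B:=\sum_{k=0}^{n}(-\zeta)^{k}\Lambda^{k}T^{*}B.
\end{equation*}

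Setting $F:=\Lambda_{-\zeta}T^{*}B-(1-\zeta)^{n}\cdot\mathbf{1}\in K^{0}(B)\otimes\ZZ^{\Gamma}$, which is a virtual bundle of rank zero, the integrand becomes
\begin{equation*}
(Ell_{0}(TB)-Td(TB))\,ch(E)=\frac{1}{(1-\zeta)^{n}}\,Td(TB)\,ch(F\otimes E).
\end{equation*}
The geometric point is that the $(U,fr)$-structure stably trivializes $TB|_{\partial B}$, so each $\Lambda^{k}T^{*}B|_{\partial B}$ is stably trivial of rank $\binom{n}{k}$; summing with the coefficients $(-\zeta)^{k}$ yields $F|_{\partial B}=0$ in $K^{0}(\partial B)\otimes\ZZ^{\Gamma}$, and in particular $(F\otimes E)|_{\partial B}=0$.

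Finally, I would invoke the APS index theorem, extended $\ZZ^{\Gamma}$-linearly in the $K$-theoretic twist, for the spin-c Dirac operator of the stable complex structure:
\begin{equation*}
\langle Td(TB)\,ch(F\otimes E),\,[B,\partial B]\rangle=\mathrm{ind}(D^{+}_{F\otimes E})+\tfrac{1}{2}(\eta+h)(\eth_{(F\otimes E)|_{\partial B}}).
\end{equation*}
Using the trivialization coming from the framing, the boundary operator decomposes as a $\ZZ^{\Gamma}$-linear combination of copies of $\eth_{E|_{\partial B}}$ whose net coefficient vanishes, so the $\eta+h$ term contributes nothing. The pairing therefore equals $\mathrm{ind}(D^{+}_{F\otimes E})$, which is a $\ZZ[\zeta]$-combination of honest indices and so lies in $\ZZ[\zeta]\subseteq\ZZ^{\Gamma}$. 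Multiplying by the unit $(1-\zeta)^{-n}\in(\ZZ^{\Gamma})^{\times}$ yields the claim.

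The main technical point is the boundary cancellation: one must be sure that $\eta+h$ is $\ZZ^{\Gamma}$-additive in the $K$-theoretic twist at the boundary, which is valid here because the relevant twist is literally trivialized there by the framing. The algebraic identity between $Ell_{0}$ and $Td$ and the multiplicative extension via the splitting principle are routine.
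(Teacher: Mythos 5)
Your proof is correct in its essentials but takes a genuinely different route from the paper's one-line argument. The paper simply observes that $(Ell_{0}(TB)-Td(TB))\,ch(E)=Td(TB)\,ch(E')$ for some relative class $E'\in K(B,\partial B)\otimes\ZZ^{\Gamma}$ and then invokes the integrality lemma from the bottom of p.~119 of Stong's \emph{Notes on Cobordism Theory} (a topological Riemann--Roch statement for $(U,fr)$-manifolds). You make the class $E'$ explicit via the identity $Ell_{0}(x)=Td(x)\cdot\frac{1-\zeta e^{-x}}{1-\zeta}$ and the $\Lambda_{-\zeta}$-operation, and then \emph{reprove} the relevant integrality analytically from the APS index theorem rather than citing it. What your approach buys is a concrete description of the twisting class (the paper hides it behind ``clearly'') and a proof that matches the analytic spirit of the rest of the paper; what the paper's approach buys is brevity and avoidance of any index-theoretic subtleties at the boundary. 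In fact your explicit $E'=(1-\zeta)^{-n}(F\otimes E)$ is precisely the kind of formula one would want recorded, and the identity $Ell_{0}(x)-Td(x)=\frac{\zeta}{1-\zeta}x$ checks out against Proposition~\ref{bernoulli}.

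One point needs tightening: the boundary $\eta$-cancellation. You write that ``the relevant twist is literally trivialized there by the framing,'' but the $(U,fr)$-structure only gives a \emph{stable} trivialization of $TB|_{\partial B}$; the individual bundles $\Lambda^{k}T^{*}B|_{\partial B}$ are generally not trivial, only trivial in $K$-theory after the alternating sum, so the operators $\eth\otimes\Lambda^{k}T^{*}B|_{\partial B}$ are not literally multiples of $\eth\otimes E|_{\partial B}$. The correct justification is either (a) the reduced invariant $\tfrac{1}{2}(\eta+h)$, taken modulo $\ZZ$, depends only on the class in $K^{0}(\partial B)$ of the twist (an integer jump under a change of bundle representing the same $K$-class is itself an instance of APS applied to the cylinder), so $[F|_{\partial B}]=0$ in $K^{0}(\partial B)\otimes\ZZ^{\Gamma}$ kills the boundary term modulo $\ZZ^{\Gamma}$; or (b) stabilize first: with $TB|_{\partial B}\oplus\CC^{m}$ honestly trivialized, $\Lambda_{-\zeta}(T^{*}B\oplus\CC^{m})|_{\partial B}=(1-\zeta)^{m}\Lambda_{-\zeta}(T^{*}B)|_{\partial B}$ is an actual flat bundle equal to $(1-\zeta)^{n+m}$, and one divides by the scalar $(1-\zeta)^{m}$. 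With either fix, your argument goes through; as written, the phrase ``literally trivialized'' overstates what the framing provides.
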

\begin{proof} Clearly, we have $\left(Ell_{0}(TB)-Td(TB)\right)ch(E)=Td(TB)ch(E')$ for a relative class $E'\in K(B,\partial B)\otimes\ZZ^{\Gamma}$; thus, integrality follows from the Lemma on the bottom of page 119 of \cite{Stong:1968zp}.
\end{proof}

\begin{lem}\label{change of sign}
Let $\lambda$ be a complex line bundle over a $(U,fr)$--manifold $B$ of dimension $2l$. Then we have:
\begin{equation*}
\begin{split}
&\quad\left\langle\widetilde{Ell}(TB)\frac{\left(Ell_{0}(\lambda)-1\right)}{c_{1}(\lambda)},[B,\partial B]\right\rangle\\
&\equiv-\left\langle\left(Td(TB)-1\right)\frac{\widetilde{Ell}(\lambda)}{c_{1}(\lambda)},[B,\partial B]\right\rangle\mod \underline{\underline{D}}_{l+1}^{\Gamma}.
\end{split}
\end{equation*}
\end{lem}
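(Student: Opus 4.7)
Write $A$ and $B$ for the two pairings appearing in the claim, so the task is to verify $A+B\equiv 0\pmod{\underline{\underline{D}}_{l+1}^{\Gamma}}$. Two weight observations from Proposition \ref{bernoulli} will drive the argument: since $\widehat{G}_{k}^{(N)}$ has weight $k$ and enters $Ell(x)$ as the coefficient of $x^{k}/(k-1)!$, the pairing $\langle Ell(TB)(Ell(\lambda)-1)/c_{1}(\lambda),[B,\partial B]\rangle$ is a pure weight-$(l+1)$ modular form (the degree-$2(l-k+1)$ part of $Ell(TB)$ carries weight $l-k+1$ and pairs with $\widehat{G}_{k}^{(N)}$ of weight $k$, for total weight $l+1$), while $\langle Ell_{0}(TB)(Ell_{0}(\lambda)-1)/c_{1}(\lambda),[B,\partial B]\rangle$ is manifestly constant in $q$, hence in $M_{0}^{\Gamma}\otimes\mathbb{Q}$. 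Both groups sit inside $\underline{\underline{D}}_{l+1}^{\Gamma}$.

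Step 1 (rewriting $A$). Applying $Ell=Ell_{0}+\widetilde{Ell}$ to both factors gives the algebraic identity
\begin{equation*}
Ell(TB)(Ell(\lambda)-1) = \widetilde{Ell}(TB)(Ell_{0}(\lambda)-1) + Ell_{0}(TB)\widetilde{Ell}(\lambda) + \widetilde{Ell}(TB)\widetilde{Ell}(\lambda) + Ell_{0}(TB)(Ell_{0}(\lambda)-1).
\end{equation*}
Dividing by $c_{1}(\lambda)$, pairing against $[B,\partial B]$, and absorbing the weight-$(l+1)$ and constant contributions via the two observations yields
\begin{equation*}
A \equiv -\langle Ell_{0}(TB)\widetilde{Ell}(\lambda)/c_{1}(\lambda),[B,\partial B]\rangle - T_{b} \pmod{\underline{\underline{D}}_{l+1}^{\Gamma}},
\end{equation*}
where $T_{b}:=\langle\widetilde{Ell}(TB)\widetilde{Ell}(\lambda)/c_{1}(\lambda),[B,\partial B]\rangle$.

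Step 2 (rewriting $B$ and combining). By Proposition \ref{reduced genus}, $\widetilde{Ell}(\lambda)/c_{1}(\lambda)=\sum_{n\geq 1}q^{n}R_{n}$ with each $R_{n}$ a $\mathbb{Z}^{\Gamma}$-linear combination of $ch(\lambda^{\pm d})$. Applying Lemma \ref{integrality} coefficient-wise shows that $\langle(Td(TB)-Ell_{0}(TB))\widetilde{Ell}(\lambda)/c_{1}(\lambda),[B,\partial B]\rangle$ lies in $\mathbb{Z}^{\Gamma}[\![q]\!]$; together with the weight bookkeeping (weights $\leq l+1$) this places it in $D_{l+1}^{\Gamma}$. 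Furthermore $\langle\widetilde{Ell}(\lambda)/c_{1}(\lambda),[B,\partial B]\rangle = (\widetilde{G}_{l+1}^{(N)}/l!)\langle c_{1}(\lambda)^{l},[B,\partial B]\rangle$ splits into its weight-$(l+1)$ part (from $\widehat{G}_{l+1}^{(N)}$) and a weight-$0$ constant (from $-c_{l+1}$), so lies in $M_{l+1}^{\Gamma}\otimes\mathbb{Q}+M_{0}^{\Gamma}\otimes\mathbb{Q}$. Consequently $B\equiv\langle Ell_{0}(TB)\widetilde{Ell}(\lambda)/c_{1}(\lambda),[B,\partial B]\rangle\pmod{\underline{\underline{D}}_{l+1}^{\Gamma}}$, and adding the two congruences the $Ell_{0}(TB)\widetilde{Ell}(\lambda)$ contributions cancel, leaving $A+B\equiv -T_{b}\pmod{\underline{\underline{D}}_{l+1}^{\Gamma}}$.

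Step 3 (main obstacle). It remains to prove $T_{b}\in\underline{\underline{D}}_{l+1}^{\Gamma}$. The same weight count puts $T_{b}$ in the span of modular forms of weights $0,\ldots,l+1$, so only integrality of the $q$-expansion is at stake; via Proposition \ref{reduced genus} this reduces to showing $\langle\widetilde{Ell}(TB)\,ch(\lambda^{\pm d}),[B,\partial B]\rangle\in\mathbb{Z}^{\Gamma}[\![q]\!]$ for each $d$. I would derive this from the $q$-expansion refinement of Lemma \ref{integrality}: the infinite-product form of $\Phi(\tau,x)$ underlying Proposition \ref{bernoulli} presents $Ell(TB)/Td(TB)-1$ as the Chern character of a virtual class in $K(B,\partial B)\otimes\mathbb{Z}^{\Gamma}[\![q]\!]$ (specializing at $q=0$ to the class behind Lemma \ref{integrality}), so $\widetilde{Ell}(TB)=Td(TB)\cdot ch(F_{q})$ with $F_{q}\in q\cdot K(B,\partial B)\otimes\mathbb{Z}^{\Gamma}[\![q]\!]$; the pairings in question then become APS-integral Todd-twisted relative Chern numbers valued in $\mathbb{Z}^{\Gamma}$. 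Establishing this structural statement---the natural level-$N$ extension of the Stong-type argument proving Lemma \ref{integrality}---is the main technical input I anticipate.
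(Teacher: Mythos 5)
Your argument is correct in outline, but it is worth pointing out that it reproduces not the paper's official proof of Lemma \ref{change of sign} but the ``more direct'' alternative sketched in the Remark immediately following it. The paper's own proof is shorter and more structural: after using Lemma \ref{integrality} and Proposition \ref{reduced genus} to replace $Td(TB)$ by $Ell_{0}(TB)$ in the lower line (your Step 2), it identifies the upper line, via the Thom isomorphism on the disk bundle $D(\lambda)$ viewed as a $(U,fr)^{2}$--manifold, with a representative of $f[S(\lambda|_{\partial B})]$, and then simply cites Laures' change-of-sign lemma for interchanging $\widetilde{Ell}$ and $Ell_{0}$. Your route instead expands $Ell(TB)(Ell(\lambda)-1)$ by hand, discards the pure weight-$(l+1)$ and constant pieces, and reduces everything to the integrality of the cross term $T_{b}=\langle\widetilde{Ell}(TB)\widetilde{Ell}(\lambda)/c_{1}(\lambda),[B,\partial B]\rangle$ --- exactly the decomposition and the reduction in the paper's Remark. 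What your version buys is self-containedness (no appeal to Laures' Lemma 4.1.7); what it costs is that the whole weight now rests on the Step 3 input, which you correctly isolate but leave as ``anticipated.'' That input is precisely the ``obvious modification of Lemma \ref{integrality}'' the paper also invokes without proof: the infinite-product presentation of $\Phi$ exhibits $Ell(TB)/Td(TB)$ as $ch$ of a class in $K(B,\partial B)\otimes\ZZ^{\Gamma}[\![q]\!]$ whose constant term is the class behind Lemma \ref{integrality}, so Stong's lemma applies coefficientwise; you should spell this out rather than defer it, but it is not a wrong turn. One further point of care: several intermediate terms in your decomposition (e.g.\ $Ell_{0}(TB)(Ell_{0}(\lambda)-1)/c_{1}(\lambda)$ and $\widetilde{Ell}(\lambda)/c_{1}(\lambda)$) are not relative classes, since $\lambda$ is not trivialized over $\partial B$, so their pairings with $[B,\partial B]$ are only defined up to choices; this is harmless because the ambiguities are rational multiples of modular forms of weight $0$ or $l+1$ and hence die in $\underline{\underline{D}}_{l+1}^{\Gamma}$, but the point deserves a sentence.
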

\begin{proof} Combining Lemma \ref{integrality} and Proposition \ref{reduced genus}, we may replace $Td(TB)$ by $Ell_{0}(TB)$ in the lower line. Moreover, the disk bundle $D(\lambda)$ becomes a $(U,fr)^{2}$--manifold in a natural way; thus, under the Thom isomorphism, the upper line can be identified with a representative of $f[S(\lambda|_{\partial B})]$. But for the $f$--invariant, interchanging the r\^oles of $\widetilde{Ell}$ and $Ell_{0}$ is known to result in a change of sign (modulo the indeterminacy), cf.\  \cite[Lemma 4.1.7]{Laures:2000bs}.
\end{proof}
\begin{rmk*}
More directly, note that we have the following equality of relative characteristic classes on a $(U,fr)^{2}$--manifold $Z$ of dimension $2l+2$:
\begin{equation*}
\begin{split}
&\widetilde{Ell}(E_1)(Ell_0(E_2)-1)+\widetilde{Ell}(E_2)(Ell_0(E_1)-1)+\widetilde{Ell}(E_1)\widetilde{Ell}(E_2)\\
&=\widetilde{Ell}(E_1\oplus E_{2})-\widetilde{Ell}(E_{1})-\widetilde{Ell}(E_{2}).\\
\end{split}
\end{equation*}
Thus, the evaluation of the upper line on the fundamental class $[Z,\partial Z]$ produces an element in $M_0^{\Gamma}\otimes\mathbb{Q}+M_{l+1}^{\Gamma}\otimes\mathbb{Q}$. Specializing to the disk bundle considered in the proof of the previous proposition, we have
$$\langle \widetilde{Ell}(E_1)\widetilde{Ell}(E_2),[Z,\partial Z]\rangle=\langle \widetilde{Ell}(TB)\widetilde{Ell}(\lambda)/c_{1}(\lambda),[B,\partial B]\rangle.$$
A priori, this expression lies in $D^{\Gamma}_{l+1}\otimes\QQ$. However, by combining  Proposition \ref{reduced genus} with an obvious modification of Lemma \ref{integrality}, we see that it expands integrally, thereby establishing the change-of-sign behavior.
\end{rmk*}

Now we can prove the result promised in the introduction:
\begin{thm}\label{main}
Let $M$ be a closed manifold of dimension $2l-1$, equipped with a parallelism $\pi$, and let $\lambda$ be a hermitian line bundle over $M$. Then, choosing a unitary connection $\nabla^{\lambda}$, the  $f$--invariant \textup{(}at the level $N$\textup{)} of the circle transfer $S(\lambda)$  may be deduced from the following formula:
\begin{multline}\label{levelN}
f[S(\lambda)]\equiv\sum_{n\geq1}\left(\sum_{d|n}\left(\zeta^{-n/d}\xi\left(\eth\otimes\lambda^{d},\pi\right)-\zeta^{n/d}\xi\left(\eth\otimes\lambda^{-d},\pi\right)\right)\right)q^{n}\\
\mod\underline{\underline{D}}_{l+1}^{\Gamma}+\ZZ^{\Gamma}[\![q]\!]+\RR\tilde G_{l+1}^{(N)}
\end{multline}
where $\tilde G_{k}^{(N)}=\widehat{G}_{k}^{(N)}-B_{k}/k$, and
$$\xi(\eth\otimes\lambda,\pi)={\textstyle\frac{1}{2}}(\eta(\eth\otimes\lambda)+\dim\ker(\eth\otimes\lambda))-\int_{M}cs\left(\hat{A};\nabla^{LC},\nabla^{\pi}\right)ch(\nabla^{\lambda})$$ consists of the Atiyah--Patodi--Singer spectral invariant of the $\lambda$--twisted Riemannian Dirac operator on $M$  and the Chern--Simons term relating the Levi--Civita connection to the parallelizing one. 
\end{thm}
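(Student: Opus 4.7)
The plan is to realize the $f$--invariant by a $(U,fr)^{2}$--manifold built as a disk bundle over a suitable bounding manifold of $M$, to rewrite the resulting characteristic integral on the base using Lemma~\ref{change of sign} and Proposition~\ref{reduced genus}, and then to apply the Atiyah--Patodi--Singer index theorem term-by-term in the $q$--expansion to convert interior integrals into spectral invariants on $M$.

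Since $\Omega^{U}_{2l-1}(\CC P^{\infty})=0$, the pair $(M,\lambda)$ bounds a $(U,fr)$--manifold $(B,\tilde\lambda)$: there is a stably almost complex manifold $B$ with $\partial B=M$, compatibly with the stable complex structure induced on $M$ by the parallelism, together with a complex line bundle $\tilde\lambda$ on $B$ restricting to $\lambda$. Equip $\tilde\lambda$ with a unitary connection extending $\nabla^{\lambda}$ and $B$ with a Riemannian metric that is product near $M$ and restricts there to the parallelism--induced one. The disk bundle $Z=D(\tilde\lambda)$ is then naturally a $(U,fr)^{2}$--manifold with $E_{1}=\pi^{*}TB$ and $E_{2}=\pi^{*}\tilde\lambda$, whose codimension--two corner is precisely $S(\lambda)$ with the transfer framing. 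The Thom isomorphism gives
$$F(Z)=\langle\widetilde{Ell}(TB)(Ell_{0}(\tilde\lambda)-1)/c_{1}(\tilde\lambda),[B,\partial B]\rangle,$$
which Lemma~\ref{change of sign} rewrites, modulo $\underline{\underline{D}}^{\Gamma}_{l+1}$, as $-\langle(Td(TB)-1)\,\widetilde{Ell}(\tilde\lambda)/c_{1}(\tilde\lambda),[B,\partial B]\rangle$.

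Expanding via Proposition~\ref{reduced genus} yields, for each $n\geq1$ and each divisor $d\mid n$, contributions proportional to $\langle Td(TB)\,ch(\tilde\lambda^{\pm d}),[B,\partial B]\rangle-\langle ch(\tilde\lambda^{\pm d}),[B,\partial B]\rangle$. The first summand is the interior APS integrand for the spin$^{c}$ Dirac operator on $B$ twisted by $\tilde\lambda^{\pm d}$; because $M$ is framed, its spin$^{c}$ structure reduces to the parallelism spin structure and the boundary operator is precisely $\eth\otimes\lambda^{\pm d}$, so APS produces, modulo $\ZZ$, the spectral term $\tfrac{1}{2}(\eta(\eth\otimes\lambda^{\pm d})+\dim\ker(\eth\otimes\lambda^{\pm d}))$. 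Transgressing the identity $Td=\hat A\cdot e^{c_{1}/2}$ along a path from the Levi--Civita connection to the parallelizing one on $M$ contributes exactly the Chern--Simons term $\int_{M}cs(\hat A;\nabla^{LC},\nabla^{\pi})ch(\nabla^{\lambda^{\pm d}})$, thereby converting the half--eta expression into $\xi(\eth\otimes\lambda^{\pm d},\pi)$. The second summand $\langle ch(\tilde\lambda^{\pm d}),[B,\partial B]\rangle$ lies in $\ZZ^{\Gamma}$ by integrality of the relative Chern character and is absorbed into $\ZZ^{\Gamma}[\![q]\!]$.

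Collecting the $\zeta$--coefficients and summing over $n$ and $d$ produces the stated formula. The three pieces of the indeterminacy $\underline{\underline{D}}^{\Gamma}_{l+1}+\ZZ^{\Gamma}[\![q]\!]+\RR\tilde G^{(N)}_{l+1}$ account respectively for the ambiguity of Lemma~\ref{change of sign}, the APS index integers together with the relative Chern character terms, and the genuinely real contribution of the Chern--Simons transgression in top weight $l+1$ (which has no constant term and hence lives in $\RR\tilde G^{(N)}_{l+1}$). Independence of the choice of $(B,\tilde\lambda)$ follows because any two such choices differ by a closed $(U,fr)$--manifold whose contribution is already swallowed by the stated indeterminacy. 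The main obstacle is the careful identification of the boundary spin$^{c}$ Dirac operator on the merely stably complex manifold $B$ with the framing--induced twisted spin Dirac $\eth\otimes\lambda^{d}$ on $M$, together with the transgression bookkeeping that converts the Levi--Civita APS boundary term into the parallelism--dependent invariant $\xi(\cdot,\pi)$ and honestly isolates the real top--weight ambiguity $\RR\tilde G^{(N)}_{l+1}$.
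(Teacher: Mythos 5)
Your proof follows the same architecture as the paper's — realize $f[S(\lambda)]$ by the disk bundle over a bounding $(U,fr)$--manifold $B$, invoke Lemma~\ref{change of sign} and Proposition~\ref{reduced genus}, and convert the interior integral to spectral data via APS together with a Chern--Simons transgression. However, it mishandles the \(-1\) term and, as a consequence, misattributes the source of the $\RR\tilde G^{(N)}_{l+1}$ indeterminacy.

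The claim that $\langle ch(\tilde\lambda^{\pm d}),[B,\partial B]\rangle\in\ZZ^{\Gamma}$ ``by integrality of the relative Chern character'' is false. The Chern character carries no integrality on its own; the Riemann--Roch integrality used in Lemma~\ref{integrality} (Stong's Lemma) requires the Todd factor. Concretely, $\langle ch(\tilde\lambda^{d}),[B,\partial B]\rangle=\tfrac{d^{l}}{l!}\langle c_{1}(\tilde\lambda)^{l},[B,\partial B]\rangle$, a generic rational number. Moreover, $\tilde\lambda$ is not assumed trivialized on $\partial B=M$, so $\tilde\lambda^{d}-1$ is not even a relative $K$--theory class to which such a theorem could apply. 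If these terms really did lie in $\ZZ^{\Gamma}[\![q]\!]$, the $\RR\tilde G^{(N)}_{l+1}$ summand of the indeterminacy would be superfluous — its presence in the statement signals the problem.

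What actually happens is that, after summing over $n$ and $d$, these \(-1\) contributions recombine into $\langle\widetilde{Ell}(\lambda)/c_{1}(\lambda),[B,\partial B]\rangle$. Since $\dim B=2l$, only the $c_{1}^{l}$ term of the expansion in Proposition~\ref{bernoulli} survives, so this lump equals $\tilde G^{(N)}_{l+1}\cdot\tfrac{1}{l!}\langle c_{1}(\tilde\lambda)^{l},[B,\partial B]\rangle$, a \emph{real multiple of $\tilde G^{(N)}_{l+1}$}. That is precisely what the $\RR\tilde G^{(N)}_{l+1}$ summand absorbs — not the Chern--Simons transgression, which you correctly carried into the definition of $\xi(\eth\otimes\lambda^{\pm d},\pi)$ and which therefore is not an ambiguity at all. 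Correcting this linked pair of errors repairs the argument; the remaining steps (Thom isomorphism on $D(\tilde\lambda)$, change of sign, the term-by-term APS identification, and the transgression to $\nabla^{\pi}$) agree with the paper's proof.
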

\begin{proof} We have $\Omega^{U}_{2l-1}\left(\CC P^{\infty}\right)=0$ as an immediate consequence of the Atiyah--Hirzebruch spectral sequence, so the parallelized manifold $M$ can be realized as the boundary of some $(U,fr)$--manifold $B$ to which the line bundle extends (where, by abuse of notation, it  will given the same name $\lambda$). 

Assume for the moment that $B$ is an honest almost complex manifold; clearly, $TB$ may be equipped with a hermitian metric   and compatible connection $\nabla^{TB}$ such that they are of product type near the boundary and restrict to those determined by the parallelism on $TM$. Furthermore, considered as a complex vector bundle, $TB$ defines a complex line bundle $L$ by taking the highest exterior power, and this determinant line $L$ naturally inherits a hermitian metric and a compatible connection with curvature $F^{L}$. In particular, the relative characteristic class $(Td(TB)-1)$ can be represented by the differential form 
\[Td\left(\nabla^{TB}\right)-1=\hat A\left(\nabla^{TB}\right)\exp{\textstyle\left(\frac{iF^L}{4\pi}\right)}-1.\]
Obviously, the RHS is already determined by the connection on the underlying real bundle $TB$ and the connection on the determinant line $L$, so we will use this to {\em define} the LHS on a stably almost complex manifold in terms of metric connections that are chosen compatibly near the boundary (ignoring the choices notationally). Now we choose a unitary connection on $\lambda$ (which will then induce unitary connections on all tensor powers), and use Lemma \ref{change of sign} and Proposition \ref{reduced genus} to obtain:
\begin{equation*}
\begin{split}
&\quad\ f[S(\lambda)]\\
&\equiv\sum_{n\geq1}\left(\sum_{d|n}\left\langle (Td(TB)-1)\left(\zeta^{-n/d}ch(\lambda^{d})-\zeta^{n/d}ch(\lambda^{-d})\right),[B,\partial B]\right\rangle\right) q^{n}\\
&=\sum_{n\geq1}\left(\sum_{d|n}\int_{B} (Td\left(\nabla^{TB}\right)-1)\left(\zeta^{-n/d}ch(\nabla^{\lambda^{d}})-\zeta^{n/d}ch(\nabla^{\lambda^{-d}})\right)\right) q^{n}\\
&\equiv\sum_{n\geq1}\left(\sum_{d|n}\int_{B} Td\left(\nabla^{TB}\right)\left(\zeta^{-n/d}ch(\nabla^{\lambda^{d}})-\zeta^{n/d}ch(\nabla^{\lambda^{-d}})\right)\right) q^{n}\ {\rm mod}\ \RR\tilde G_{l+1}^{(N)}
\end{split}
\end{equation*}
Note that due to the indeterminacy introduced in the last line, the choice of  $\nabla^{\lambda}$ is immaterial. If, on the other hand, we want to change the connection on $TB$ to the Levi-Civita connection, we have to include a correction term: Using the transgression formula, we can construct a canonical form $cs(\hat A;\nabla^{TB,LC},\nabla^{TB})$ of (inhomogeneous) odd  degree satisfying
\[{\rm d}cs\left(\hat A;\nabla^{TB,LC},\nabla^{TB}\right)=\hat{A}\left(\nabla^{TB,LC}\right)-\hat{A}\left(\nabla^{TB}\right);\]
restriction to $M$ then yields the Chern--Simons form $cs(\hat A;\nabla^{LC},\nabla^{\pi})$ associated to $\hat A$. But since the curvature of the determinant line vanishes on $M$, it can also be interpreted as the Chern--Simons form associated to the Todd genus form. Thus, in order to obtain the desired correction term, it suffices to multiply with the Chern character form and integrate over $M$. Finally, we apply the Atiyah--Patodi--Singer index theorem, and the proof is complete.
%
%
\end{proof}

Although it is  the conceptually correct approach to consider all twistings, it suffices to consider positive tensor powers of the hermitian line bundle:

\begin{cor}\label{complexreduction}
Under the assumptions and in the notation of Theorem {\textup{\ref{main}}}, and modulo the indeterminacy $\underline{\underline{D}}_{l+1}^{\Gamma}+\ZZ^{\Gamma}[\![q]\!]+\RR\tilde G_{l+1}^{(N)}$, we have:
\begin{equation*}
f[S(\lambda)]\equiv\sum_{n\geq1}\left(\sum_{d|n}\left(\zeta^{-n/d}+(-1)^{l+1}\zeta^{n/d}\right)\xi\left(\eth\otimes\lambda^{d},\pi\right)\right)q^{n}.
\end{equation*}
Moreover, if $l$ is even, then $2f[S(\lambda)]=0$.
\end{cor}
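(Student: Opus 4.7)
The plan is to reduce the corollary to a single symmetry identity for the spectral invariant $\xi$ and then deduce the $2$-torsion statement via a geometric observation about the orientation of the disk bundle.

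First I would argue that it suffices to establish
\[
\xi(\eth \otimes \lambda^{-d},\pi) \equiv (-1)^{l}\,\xi(\eth \otimes \lambda^{d},\pi) \pmod{\ZZ^{\Gamma}}
\]
for every $d \geq 1$. Indeed, substituting into the formula of Theorem \ref{main} converts the term $-\zeta^{n/d}\xi(\lambda^{-d})$ into $(-1)^{l+1}\zeta^{n/d}\xi(\lambda^{d})$, and the resulting discrepancy is a $q$-series all of whose coefficients lie in $\ZZ^{\Gamma}$, hence in the indeterminacy.

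To prove the symmetry, I would unwind the proof of Theorem \ref{main} to obtain $\xi(\eth\otimes\lambda^{e},\pi) \equiv \int_{B} Td(\nabla^{TB})\,ch(\nabla^{\lambda^{e}}) \pmod{\ZZ}$ for any $(U,fr)$-filling $B$ of $M$ over which $\lambda$ extends. Applying Lemma \ref{integrality} to the virtual bundle $\lambda^{-d} - (-1)^{l}\lambda^{d}$ replaces $Td$ by $Ell_{0}$ modulo $\ZZ^{\Gamma}$. Next I would use the factorisation
\[
Ell_{0}(V) = \frac{Td(V)}{(1-\zeta)^{l}}\,ch\bigl(\Lambda_{-\zeta}V^{*}\bigr),
\]
derivable via the splitting principle from $Ell_{0}(x)=x(1-\zeta e^{-x})/((1-\zeta)(1-e^{-x}))$, together with the stable triviality of $T^{*}B|_{M}$ provided by the parallelism (so that $\Lambda_{-\zeta}T^{*}B|_{M}=(1-\zeta)^{l}$ in $K(M)$), to cancel the $(1-\zeta)^{l}$ factors on the boundary and reduce the task modulo $\ZZ^{\Gamma}$ to
\[
e_{\CC}[M,\lambda^{-d}] \equiv (-1)^{l}\,e_{\CC}[M,\lambda^{d}] \pmod{\ZZ}.
\]
This last is the Serre-duality symmetry of Adams' $e_{\CC}$-invariant: one has $\chi(-,E^{*})=(-1)^{l}\chi(-,E\otimes K)$ on a closed complex $l$-fold, and on a $(U,fr)$-filling the canonical bundle $K_{B}$ is trivialised along the boundary by the parallelism, so the identity persists modulo integers.

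For the final clause ($l$ even) I would apply the first part of the corollary to $\bar\lambda=\lambda^{-1}$: since $(-1)^{l}=1$, the just-established symmetry gives $\xi(\lambda^{-d})\equiv\xi(\lambda^{d})\pmod{\ZZ^{\Gamma}}$, so $f[S(\bar\lambda)]\equiv f[S(\lambda)]$ modulo the indeterminacy. On the other hand, the disk bundle $D(\bar\lambda)$ agrees with $D(\lambda)$ as a real manifold but carries the opposite complex orientation on its fibre, so $[D(\bar\lambda),\partial D(\bar\lambda)]=-[D(\lambda),\partial D(\lambda)]$ as $(U,fr)^{2}$-fundamental classes, whence $f[S(\bar\lambda)]=-f[S(\lambda)]$. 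Combining the two equations yields $2f[S(\lambda)]\equiv 0$.

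The hard part will be the Serre-duality step: although standard on closed manifolds, transporting it to the $(U,fr)$ setting requires justifying that the boundary contribution vanishes modulo integers, which hinges on the triviality of $K_{B}|_{M}$ inherited from the parallelism.
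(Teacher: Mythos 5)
Your reduction of the displayed formula to the single congruence $\xi(\eth\otimes\lambda^{-d},\pi)\equiv(-1)^{l}\xi(\eth\otimes\lambda^{d},\pi)$ modulo integers is exactly the paper's first step, and your derivation of $2f[S(\lambda)]=0$ for even $l$ from this symmetry together with $[S(\lambda^{-1})]=-[S(\lambda)]$ matches the paper as well. Where you genuinely diverge is in the proof of the symmetry itself. The paper splits into cases: for even $l$ it argues on the filling $B$ exactly as you do (conjugating $TB$, using $Td(\overline{TB})=Td(TB)\,ch(\overline{L})$ and the integrality of the relative class $\overline{L}\otimes\overline{\lambda}$ --- this is precisely your ``Serre duality'' step, where the sign $(-1)^{l}$ is the orientation change of conjugation and is $+1$); for odd $l$, however, the paper stays on $M$ itself and exhibits a quaternionic structure on $S_{4k+1}\otimes(\lambda^{d}\oplus\lambda^{-d})$ anticommuting with the Dirac operator, forcing spectral symmetry and an even-dimensional kernel, while the Chern--Simons term visibly changes sign. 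Your uniform index-theoretic argument on $B$ does work for odd $l$ too (conjugation then reverses the orientation of the $2l$-dimensional $B$, producing the $(-1)^{l}=-1$), so your route is valid and arguably cleaner in that it avoids the case distinction; what it gives up is the intrinsic character of the paper's odd-$l$ argument, which needs no bounding manifold and yields extra spectral information (symmetry of the spectrum, parity of kernels) that the paper reuses in the quaternionic Corollary. One concrete criticism: your intermediate detour through $Ell_{0}$ and the factorisation $Ell_{0}(V)=Td(V)(1-\zeta)^{-l}ch(\Lambda_{-\zeta}V^{*})$ is circular --- cancelling the $(1-\zeta)^{l}$ factors via the stable triviality of $T^{*}B|_{M}$ just undoes the application of Lemma \ref{integrality} and returns you to $\int_{B}Td(\nabla^{TB})ch(\nabla^{\lambda^{\pm d}})$; you should delete that step and pass directly from the Todd-genus expression to the conjugation/duality identity, taking care (as the paper does) that the boundary corrections are controlled because the determinant line and $\lambda$-trivialisations are supplied by the parallelism.
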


\begin{proof} Since $[S(\lambda^{-1})]=-[S(\lambda)]$, it suffices to show
\begin{equation*}\label{symmetry}
\xi\left(\eth\otimes\lambda^{-d},\pi\right)\equiv(-1)^{l}\xi\left(\eth\otimes\lambda^{d},\pi\right)\mod\ZZ.
\end{equation*}
First, consider the case $l=2k+1$; it is well-known that untwisted spinors in dimension $4k+1$ carry a quaternionic structure if $k$ is odd, and a real structure if $k$ is even. Furthermore, the complex vector bundle $\lambda^{d}\oplus\lambda^{-d}$ admits both a real structure  as well as a quaternionic structure; therefore, we may put a quaternionic structure on the twisted spinor bundle $S_{4k+1}\otimes(\lambda^{d}\oplus\lambda^{-d})$, which implies that its Dirac spectrum is symmetric (since Clifford multiplication and therefore also the twisted Dirac operator anticommutes with the quaternionic structure) and that the kernel is even-dimensional (as a complex vector space). On the other hand,  the Chern--Simons term computed for $\lambda^{d}$ clearly differs from the one for $\lambda^{-d}$ by a factor of $(-1)$.

Now let $l=2k$; then, the (stably almost) complex vector bundle $TB$ and its conjugate bundle $\overline{TB}$ induce the same orientation on the $4k$--dimensional $(U,fr)$--manifold $B$, and we conclude
\begin{equation*}
\begin{split}
\int_{B^{4k}}Td\left(\nabla^{TB}\right)ch\left(\nabla^{\lambda^{d}}\right)&=\int_{B^{4k}}Td\left(\nabla^{\overline{TB}}\right)ch\left(\nabla^{\lambda^{-d}}\right)\\
&\equiv\int_{B^{4k}}Td\left(\nabla^{TB}\right)ch\left(\nabla^{\lambda^{-d}}\right)\mod\ZZ
\end{split}
\end{equation*}
by noting $Td(\overline{TB})=Td(TB)ch(\overline{L})$ and applying the integrality argument of (the proof of) Lemma \ref{integrality} to the relative class $\overline{L}\otimes\overline{\lambda}\in K(B,\partial B)$.
\end{proof}
 

\begin{rmk*} Of course, the result $2f[S(\lambda)]=0$ for $l=2k$ also follows from  purely algebraic considerations, viz.\ that $\text{Ext}^{2,4k+2}_{MU_{*}MU}(MU_{*},MU_{*})$ consists entirely of 2-torsion.
\end{rmk*}


\section{The quaternionic situation}\label{quaternionic case}
There is an obvious analog in the quaternionic setting (identifying $Sp(1)\cong SU(2)$ as usual): The transfer map becomes
\begin{equation}\label{h-transfer}
S_{\HH}\colon\Omega^{fr}_{*}\left(\HH P^{\infty}\right)\rightarrow\Omega^{fr}_{*+3}
\end{equation}
and we have:
\begin{thm}\label{h-main}
Let $M$ be a closed manifold of dimension $2l-3>3$, equipped with a parallelism $\pi$, and let $\lambda_{\HH}$ be a quaternionic hermitian line bundle over~$M$. Then, choosing a unitary connection on $\lambda_{\HH}$, the  $f$--invariant \textup{(}at the level~$N$\textup{)} of the transfer $S(\lambda_{\HH})$  may be deduced from the following formula:
\begin{multline}
f[S(\lambda_{\HH})]\equiv\sum_{n\geq1}\left(\sum_{d|n}\xi\left(\eth\otimes d\left(\psi^{d}\lambda_{\HH}-2\right),\pi\right)\right)q^{n}\\
\mod\left\{\begin{array}{lcl}\underline{\underline{D}}_{l+1}^{\Gamma}+\ZZ^{\Gamma}[\![q]\!]&\mbox{if}&l\equiv0(2)\\
\underline{\underline{D}}_{l+1}^{\Gamma}+\ZZ^{\Gamma}[\![q]\!]+\RR\tilde G_{l+1}&\mbox{if}&l\equiv1(2) \end{array} \right.
\end{multline}
where $\tilde G_{l+1}=G_{l+1}+B_{l+1}/(2l+2)$ is the ordinary Eisenstein series with its constant term removed, and $\psi^{d}\lambda_{\HH}$ denotes the virtual hermitian vector bundle \textup{(}with the induced connection\textup{)} determined by the $d^{th}$ Adams operation.
\end{thm}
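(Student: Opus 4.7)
The strategy follows that of Theorem~\ref{main} \emph{mutatis mutandis}, replacing $\CC P^{\infty}$ by $\HH P^{\infty}$, the circle bundle by the $3$-sphere bundle, and the complex line bundle by the rank-two complex bundle underlying $\lambda_{\HH}$. First, the Atiyah--Hirzebruch spectral sequence together with the fact that $H_{*}(\HH P^{\infty})$ is concentrated in degrees divisible by four (while $MU_{*}$ lives in even degrees) forces $\Omega^{U}_{2l-3}(\HH P^{\infty})=0$ on dimension grounds. Accordingly $M$ bounds a $(U,fr)$-manifold $B$ of dimension $2l-2$ over which $\lambda_{\HH}$ extends, and the associated $4$-disk bundle $D(\lambda_{\HH})$ becomes a $(2l+2)$-dimensional $(U,fr)^{2}$-manifold whose codimension-two corner is $S(\lambda_{\HH})$.

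Next, I would develop a quaternionic analog of Lemma~\ref{change of sign}. Pushing the relative class $\widetilde{Ell}(\pi^{*}TB)(Ell_{0}(\pi^{*}\lambda_{\HH})-1)$ down along the $4$-disk bundle via the Thom isomorphism and invoking (a suitable extension of) Lemma~\ref{integrality}, one obtains
\[
f[S(\lambda_{\HH})]\equiv -\left\langle (Td(TB)-1)\,\frac{\widetilde{Ell}(\lambda_{\HH})}{e(\lambda_{\HH})},[B,\partial B]\right\rangle \mod \underline{\underline{D}}^{\Gamma}_{l+1},
\]
where $e(\lambda_{\HH})=c_{2}(\lambda_{\HH})$ is the Euler class. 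The quadratic cross term $\widetilde{Ell}(x)\widetilde{Ell}(-x)/e(\lambda_{\HH})$ that appears when $\lambda_{\HH}$ is formally decomposed into its Chern roots $\pm x$ lands in $\underline{\underline{D}}^{\Gamma}_{l+1}$ by the argument from the remark following Lemma~\ref{change of sign}.

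The algebraic heart of the proof is then to identify the $q$-expansion of the remaining integrand with $\sum_{n\geq 1}q^{n}\sum_{d|n}d\cdot ch(\psi^{d}\lambda_{\HH}-2)$. Proposition~\ref{reduced genus} gives a $q$-expansion for each $\widetilde{Ell}(\pm x)$, and upon symmetrisation in $x\leftrightarrow -x$ the $\zeta$-dependent coefficients collapse into the $x$-symmetric combination $e^{dx}+e^{-dx}=ch(\psi^{d}\lambda_{\HH})$; the subtracted constant $2$ is produced by the division by $e(\lambda_{\HH})$ together with the integrality argument legitimising the replacement $Ell_{0}(TB)\to Td(TB)$. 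With the integrand recast in this form, the APS index theorem and the Chern--Simons transgression argument from Theorem~\ref{main} apply verbatim to each twist $d(\psi^{d}\lambda_{\HH}-2)$: the index becomes an integer absorbed into $\ZZ^{\Gamma}[\![q]\!]$, and what survives is the spectral invariant $\xi(\eth\otimes d(\psi^{d}\lambda_{\HH}-2),\pi)$. The parity distinction in the stated indeterminacy is then automatic, since the ordinary Eisenstein series $G_{l+1}$ (and hence $\tilde G_{l+1}$) vanishes for odd weight $l+1$, i.e.\ precisely when $l$ is even.

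The main obstacle will be the bookkeeping in this algebraic step: tracking the $\zeta$-factors through the symmetrisation, proving an analog of Lemma~\ref{integrality} for the virtual bundle $\psi^{d}\lambda_{\HH}-2$ (which follows since $\psi^{d}$ preserves $K(B,\partial B)$ and fixes the trivial class), and verifying that every term outside the desired combination genuinely lies in $\underline{\underline{D}}^{\Gamma}_{l+1}+\ZZ^{\Gamma}[\![q]\!]+\RR\tilde G_{l+1}$. The fact that only ordinary Eisenstein series (rather than level-$N$ ones as in the complex case) enter the indeterminacy is a pleasant byproduct of the quaternionic symmetry and should emerge naturally once this accounting is done.
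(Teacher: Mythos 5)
Your overall architecture matches the paper's: realize $M$ as the boundary of a $(U,fr)$--manifold $B$ over which $\lambda_{\HH}$ extends, promote the disk bundle to a $(U,fr)^{2}$--manifold, transpose $\widetilde{Ell}$ and $Ell_{0}$ via the change-of-sign argument, and finish with APS plus the Chern--Simons transgression; the pieces you list (vanishing of $\Omega^{U}_{2l-3}(\HH P^{\infty})$, integrality for $\psi^{d}\lambda_{\HH}-2$, the explanation of the parity dichotomy via the fact that $ch(\psi^{d}\lambda_{\HH}-2)$ is concentrated in degrees divisible by four) are all sound. The genuine gap is in what you call the algebraic heart. The quaternionic analog of Proposition \ref{reduced genus} cannot be obtained by ``symmetrising'' the complex expansion in $x\leftrightarrow-x$, because the elliptic genus is multiplicative, not additive, over Chern roots: writing $\pm x$ for the roots of $\lambda_{\HH}$, one has
\begin{equation*}
\widetilde{Ell}(\lambda_{\HH})=\widetilde{Ell}(x)\,Ell_{0}(-x)+Ell_{0}(x)\,\widetilde{Ell}(-x)+\widetilde{Ell}(x)\widetilde{Ell}(-x),
\end{equation*}
and after dividing by $c_{2}(\lambda_{\HH})=-x^{2}$ each summand separately has a pole at $x=0$ and carries $\zeta$--dependence both through Proposition \ref{reduced genus} and through the constant term $\zeta/(1-\zeta)$ of $Ell_{0}(\pm x)/(\pm x)$. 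The $\zeta$'s do not simply collapse onto $e^{dx}+e^{-dx}$. What is actually true --- and what the paper's proof rests on, essentially via the expansion of the Weierstrass $\wp$--function --- is the closed-form identity
\begin{equation*}
\frac{Ell(\lambda_{\HH})-1}{c_{2}(\lambda_{\HH})}=g_{2}^{(N)}+\sum_{k\geq1}(-1)^{k+1}G_{2k+2}\,\frac{c_{2}^{k}(\lambda_{\HH})}{(2k)!/2},
\end{equation*}
in which the entire level-$N$ dependence is absorbed into a single weight-two form $g_{2}^{(N)}\equiv\frac{1}{12}\bmod\ZZ^{\Gamma}[\![q]\!]$ and all higher terms are \emph{ordinary} even-weight Eisenstein series. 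Your proposal neither states nor establishes this identity; the asserted collapse of the $\zeta$--coefficients is precisely the nontrivial content being skipped.

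A second, consequent omission: you never dispose of the weight-two term. The form $g_{2}^{(N)}$ multiplies $\left\langle Td(TB)-1,[B,\partial B]\right\rangle\equiv e_{\CC}[M]$, producing a contribution in $M_{2}^{\Gamma}\otimes\QQ$ that is \emph{not} covered by the stated indeterminacy. This is exactly where the hypothesis $2l-3>3$ --- which your argument never invokes --- enters: it forces $g_{2}^{(N)}e_{\CC}[M]\equiv0$, i.e.\ the vanishing of the $f$--invariant of the transfer of the trivial quaternionic line $M\times\HH\to M$. An argument that makes no use of this hypothesis cannot be complete, since for $2l-3=3$ the corresponding term genuinely survives.
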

\begin{proof}
A straightforward computation reveals 
\begin{equation*}
\frac{{Ell}\left(\lambda_{\HH}\right)-1}{c_{2}\left(\lambda_{\HH}\right)}=g_{2}^{(N)}+\sum_{k\geq1}(-1)^{k+1}G_{2k+2}\frac{c_{2}^{k}\left(\lambda_{\HH}\right)}{(2k)!/2}
\end{equation*}
where $g_{2}^{(N)}$ is a modular form of level $N$ and weight two, and it satisfies $g_{2}^{(N)}\equiv\frac{1}{12}\mod\ZZ^{\Gamma}[\![q]\!]$ (cf.\ \cite{Bodecker:2011fu}). Clearly, we have
\begin{equation*}
\sum_{k\geq1}(-1)^{k+1}\tilde G_{2k+2}\frac{c_{2}^{k}\left(\lambda_{\HH}\right)}{(2k)!/2}=-\sum_{n\geq1}\left(\sum_{d|n}d\psi^{d}ch\left(\lambda_{\HH}-2\right)\right)q^{n}
\end{equation*}
which is the quaternionic analog of Proposition \ref{reduced genus}; noting that the assumption on the dimension of the base implies $g_{2}^{(N)}e_{\CC}[M]\equiv0$ (i.e.\ the $f$--invariant of the transfer on the trivial line $M\times\HH\rightarrow M$ vanishes), the desired result is obtained by making the obvious adjustments to the argument given in the complex situation.
\end{proof}

\begin{rmk*}\label{spelled-out}
We can spell out the $K$--theory class $\psi^{d}\lambda_{\HH}$ as follows: Recall that the complex representation ring $RU(Sp(1))$ is polynomial in the fundamental two-dimensional representation, say $V$, with character $\chi_{V}\left(e^{it}\right)=2\cos(t)$; by the splitting principle, we have $\chi_{\psi^{d}V}\left(e^{it}\right)=2\cos(dt)$.

Now recall that Chebyshev polynomials of the first kind are recursively defined by
\begin{equation*}
T_{0}(x)=1,\quad T_{1}(x)=x,\quad T_{n+1}(x)=2xT_{n}(x)-T_{n-1}(x);
\end{equation*}
the multiple angle law for the cosine (for positive integers $d$) may then be expressed as $\cos(dt)=T_{d}\left(\cos(t)\right)$, showing that  $\psi^{d}\lambda_{\HH}$ expands in terms of (complex) tensor powers of $\lambda_{\HH}$ as $2T_{d}(\lambda_{\HH}/2)$ (which actually lies in $\ZZ[\lambda_{\HH}]$).

If, on the other hand, one prefers irreducible representations, one can make use of Chebyshev polynomials of the second kind: Their recurrence relation reads
\begin{equation*}
U_{0}(x)=1,\quad U_{1}(x)=2x,\quad U_{n+1}(x)=2xU_{n}(x)-U_{n-1}(x)
\end{equation*}
and one has $U_{n}(\cos(t))=\frac{\sin\left((n+1)t\right)}{\sin(t)}$, which may be identified with the character of the unique irreducible complex representation of dimension $n+1$. Assuming $d>1$, we have
\begin{equation*}
2\cos(dt)=2T_{d}(\cos(t))=U_{d}(\cos(t))-U_{d-2}(\cos(t)),
\end{equation*}
which yields the description of $\psi^{d}\lambda_{\HH}$ as the difference of the vector bundles associated to the irreducible representations of dimension $d+1$, $d-1$, respectively.
\end{rmk*}

\begin{cor}\label{quaternionicreduction}
Under the assumptions and in the notation of Theorem  \textup{\ref{h-main}}, and modulo the indeterminacy $\underline{\underline{D}}_{l+1}^{\Gamma}+\ZZ^{\Gamma}[\![q]\!]$, we have:
\begin{equation*}
f[S(\lambda_{\HH})]\equiv\left\{\begin{array}{lcl}\frac{1}{2}\sum_{n\geq1}\left(\sum_{2\nmid d|n}\dim\ker\left(\eth\otimes \psi^{d}\lambda_{\HH}\right)\right)q^{n}&\mbox{if}&l\equiv0(4),\\
0&\mbox{if}&l\equiv2(4). \end{array} \right.
\end{equation*}
\end{cor}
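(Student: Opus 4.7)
The plan is to apply Theorem \ref{h-main} and simplify its right-hand side using the real/quaternionic structures available on $S$ and on $\psi^{d}\lambda_{\HH}$, in analogy with the proof of Corollary \ref{complexreduction}. Since $l$ is even, we have $\dim M = 2l-3 = 4k+1$ with $k=(l-2)/2$, and as in Corollary \ref{complexreduction} the untwisted complex spinor bundle $S$ on $M$ is quaternionic when $k$ is odd (so $l\equiv 0\pmod 4$) and real when $k$ is even (so $l\equiv 2\pmod 4$). On the bundle side, the Chebyshev description of $\psi^{d}\lambda_{\HH}$ given in the remark after Theorem \ref{h-main} realises it in $K$--theory as an integral combination of bundles attached to the irreducible complex $SU(2)$--representations of dimensions $d+1$ and $d-1$; these are of real type when $d$ is even and of quaternionic type when $d$ is odd.

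First I would eliminate the Chern--Simons contribution to $\xi$. Because $\hat A$ is a polynomial in the Pontryagin classes, the transgression form $cs(\hat A;\nabla^{LC},\nabla^{\pi})$ is concentrated in degrees $\equiv 3\pmod 4$; and because $c_{1}(\lambda_{\HH})=0$, $ch(\psi^{d}\lambda_{\HH})=2\cosh(dx)$ sits in degrees $\equiv 0\pmod 4$. Their product has no $(4k+1)$--component, so the Chern--Simons integral on $M$ vanishes for every $d\geq 0$, leaving $\xi(\eth\otimes\psi^{d}\lambda_{\HH},\pi)=\tfrac{1}{2}(\eta_{d}+h_{d})$ and $\xi(\eth,\pi)=\tfrac{1}{2}(\eta_{0}+h_{0})$, with $h_{d}=\dim\ker(\eth\otimes\psi^{d}\lambda_{\HH})$.

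Next I would invoke the symmetry argument from the proof of Corollary \ref{complexreduction}: the (virtual) twisted spinor bundle $S\otimes\psi^{d}\lambda_{\HH}$ inherits an antilinear structure from the ones on its two factors, Clifford multiplication anticommutes with it, and hence the twisted Dirac spectrum is symmetric, giving $\eta_{d}=0$ for all $d\geq 0$. The inherited structure is quaternionic precisely when the two factors are of different types, in which case $h_{d}$ is additionally forced to be even. Substituting into Theorem \ref{h-main} and using linearity of $\xi$ in the twist, the $\xi(\eth)$--terms contribute integers and are absorbed into $\ZZ^{\Gamma}[\![q]\!]$, so the formula reduces modulo this indeterminacy to $f[S(\lambda_{\HH})]\equiv\tfrac{1}{2}\sum_{n\geq 1}\bigl(\sum_{d|n}dh_{d}\bigr)q^{n}$.

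A short parity analysis then finishes the proof. For $l\equiv 0\pmod 4$ ($S$ quaternionic): if $d$ is even then $\psi^{d}\lambda_{\HH}$ is real, the inherited structure is quaternionic, $h_{d}$ is even, and $dh_{d}/2$ is an integer; if $d$ is odd then $(d-1)h_{d}/2\in\ZZ$, so $dh_{d}/2\equiv h_{d}/2\pmod\ZZ$. Summing these contributions gives $f[S(\lambda_{\HH})]\equiv\tfrac{1}{2}\sum_{n\geq 1}\sum_{2\nmid d|n}h_{d}\,q^{n}$. For $l\equiv 2\pmod 4$ ($S$ real): if $d$ is even then $dh_{d}/2=(d/2)h_{d}\in\ZZ$; if $d$ is odd then $\psi^{d}\lambda_{\HH}$ is quaternionic, the inherited structure is quaternionic, $h_{d}$ is even, and again $dh_{d}/2\in\ZZ$, whence $f[S(\lambda_{\HH})]\equiv 0$. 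The main obstacle is the careful bookkeeping of real versus quaternionic types in all four combinations of the parities of $k$ and $d$ and the resulting parity of $h_{d}$; once these are settled, the degree-count vanishing of the Chern--Simons term and the Dirac-spectrum-symmetry argument are direct adaptations of the complex case.
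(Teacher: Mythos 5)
Your proof is correct and follows essentially the same route as the paper: eliminate the Chern--Simons contribution by observing that only degree--$0\bmod 4$ forms appear, invoke the anticommuting structure map $J_{S}\otimes J_{d}$ on $S_{4k+1}\otimes\psi^{d}\lambda_{\HH}$ to kill all $\eta$--invariants, and then do a parity bookkeeping of $d\,h_{d}/2$ using the real/quaternionic types in each of the four cases. The only (cosmetic) difference is in the Chern--Simons step: you argue directly on $M$ via the degree constraint that $cs(\hat A)$ lives in degrees $\equiv 3\pmod 4$ while $ch(\psi^{d}\lambda_{\HH}-2)$ lives in degrees $\equiv 0\pmod 4$, whereas the paper phrases it via the $(4k+2)$--form part of the index density on $B$ being proportional to the determinant--line curvature $F^{L}$, which vanishes on $M$; both give the same vanishing.
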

\begin{proof}
First of all, note that the $(4k+2)$--form part in the index density
\begin{equation*}
Td\left(\nabla^{TB}\right)ch\left(\psi^{d}\lambda_{\HH}-2\right)=\hat{A}\left(\nabla^{TB}\right)\exp\left(\textstyle{\frac{iF^{L}}{4\pi}}\right)ch\left(\psi^{d}\lambda_{\HH}-2\right)
\end{equation*}
is proportional to the curvature 2--form of the determinant line, $F^{L}$; since the latter vanishes when restricted to the framed manifold $M$, the same holds true for the Chern--Simons term keeping track of a change of connection on $TB|_{M}$. Next, recall that the (complex) spinor representations in dimension $4k+1$ come with a structure map $J_{S}$ which is real or quaternionic if $k$ is even or odd, respectively, and which anticommutes with Clifford multiplication. Similarly, the $Sp(1)$ represenations $\psi^{d}V$ carry a real (resp.\ quaternionic) structure map $J_{d}$ if $d$ is even (resp.\ odd). Therefore, for any $d$ and $k$, the spinor bundle $S_{4k+1}\otimes\psi^{d}\lambda_{\HH}$ carries a structure map (induced by $J_{S}\otimes J_{d}$) that anticommutes with the Dirac operator $\eth\otimes\psi^{d}\lambda_{\HH}$, so that its spectrum is symmetric. Therefore, we are left with the contribution from the kernels; reducing modulo two and taking into account that the structure map is quaternionic  for $l\equiv2\mod4$, the result follows.
\end{proof}


\section{Some examples}\label{some examples}

To start with, let $M$ be a closed parallelized manifold of dimension $2l-1$, and let $\lambda$ be the trivial complex line bunle $M\times\CC\to M$, endowed $\lambda$ with the natural hermitian metric and the tautological connection, $\nabla^{0}$. Then, for any $d$, we have $\xi(\eth\otimes\lambda^{d},\pi)\equiv e_{\CC}[M]$; applying Theorem \ref{main}, we immediately obtain a {\em rational} representative: 
\[f[S(\lambda)]\equiv -\tilde G_{1}^{(N)}e_{\CC}[M]\mod\underline{\underline{D}}_{l+1}^{\Gamma}+\ZZ^{\Gamma}[\![q]\!]+\RR\tilde G_{l+1}^{(N)}.\]
But here is an (admittedly artificial) example to illustrate that the full indeterminacy is needed: Let $\lambda$ be the trivial complex line bundle over the circle $M\cong\RR/\ZZ$, parametrized by the variable $t$. On We still use the natural hermitian metric on $\lambda$, but introduce the unitary connection $\nabla^{\lambda}=\nabla^{0}+2\pi i\epsilon {\rm d}t$, $0<\epsilon<1$. The standard parallelism of $\RR$ descends to the circle and induces the non-bounding spin structure. Then, twisted spinors are $\ZZ$--invariant complex-valued functions on the real line that are square-integrable on a fundamental domain, and the $\lambda$--twisted Dirac operator reads $\eth\otimes\lambda=-i\partial_{t}+2\pi\epsilon$. Therefore, its spectrum is $\{2\pi(k+\epsilon)\colon k\in\ZZ\}$, so
\[\eta(\eth\otimes\lambda,s)=(2\pi)^{-s}\left(\zeta_{H}(s,\epsilon)-\zeta_{H}(s,1-\epsilon)\right),\]
where $\zeta_{H}(s,x)$ is the Hurwitz zeta function. In particular, this implies that $(\eta(\eth\otimes\lambda)+\dim\ker(\eth\otimes\lambda))/2\equiv 1/2-\epsilon\mod\ZZ$; similarly, \[\frac{\eta(\eth\otimes\lambda^{d})+\dim\ker(\eth\otimes\lambda^{d})}{2}\equiv \frac{1}{2}-d\epsilon\mod\ZZ.\]
For dimensional reasons, there is no Chern--Simons term,  showing that the quantities $\xi(\eth\otimes\lambda^{d},\pi)$ may  very well be irrational (except for $\xi(\eth\otimes\lambda^{0},\pi)\equiv e_{\CC}[M]$); nevertheless,  applying Corollary \ref{complexreduction} to our example, we have
\[f[S(\lambda)]\equiv{\textstyle\frac{1}{2}}\tilde G_{1}^{(N)}+\epsilon\tilde G_{2}^{(N)}\mod\underline{\underline{D}}_{2}^{\Gamma}+\ZZ^{\Gamma}[\![q]\!]+\RR\tilde G_{2}^{(N)},\]
allowing us to recover $f[\eta^{2}]$ from our purely analytical computation.

For a more interesting example, let $G=S_{L}^3\times S_{R}^3$ be the Lie group  consisting of pairs $(q_{L},q_{R})$ of unit quaternions, $H\cong S^{1}$ the closed subgroup with Lie algebra spanned by $(0,k_{R})$, and $p\colon G\to G/H$ the natural projection. Decomposing $\mathfrak{g}\cong {\mathfrak{m}}\oplus\mathfrak{h}$ and specifying the ordered basis $\{(i_{L},0),(j_{L},0),(0,i_{R}),(0,j_{R}),(k_{L},0)\}$ for $\mathfrak{m}$, we treat $G/H$ as oriented Riemannian homogeneous space endowed with the normal homogeneous metric induced by the obvious bi-invariant metric on $G$.
Clearly, the isotropy representation $\iota\colon H\rightarrow {\rm Aut}({\mathfrak m})$ extends to a representation $\rho\colon G\rightarrow {\rm Aut}({\mathfrak m})$, where $\rho\cong1\oplus1\oplus Ad_{S^{3}_{R}}$ and $Ad_{S^{3}_{R}}$ is considered as a $G$--module. Thus, $T(G/H)\cong G\times_{\iota}{\mathfrak{m}}$ may be trivialized using sections $$s_{i}\colon G/H\rightarrow T(G/H)\cong G\times_{\iota}{\mathfrak{m}},\ gH\mapsto [g,\rho(g)^{-1}e_{i}];$$ by construction, this parallelism $\pi$ is compatible with the chosen metric. Now let $\lambda\to G/H$ be the tautological homogeneous complex line bundle associated to the standard representation of $H\cong U(1)$, and endow it with the natural hermitian metric and connection.  Note that, with our choice of orientation, $c_{1}(\lambda)$ restricts to the {\em positive} generator of $H^{2}(S^{3}_{R}/S^{1};\ZZ)$.

\begin{prop*} Let $\lambda\to G/H$ be this tautological  hermitian line bundle  and let $G/H$ be parallelized as above. Then the transfer  $S(\lambda)$ represents the non-trivial element $\nu^{2}\in\pi^{S}_{6}S^{0}\cong\ZZ/2$.
\end{prop*}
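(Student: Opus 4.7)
The strategy is to apply Corollary~\ref{complexreduction} and to show that the resulting $f$--invariant is non-trivial modulo the indeterminacy; since $\pi^S_6 S^0 \cong \ZZ/2$ is generated by $\nu^2$, this will force $[S(\lambda)]=\nu^2$. As a first step I observe that $\lambda$ is the pullback of the tautological line bundle on $S^3_R/S^1\cong\CC P^1$ along the second projection $G/H\to S^3_R/S^1$; its unit sphere bundle is the Hopf fibration $S^3_R\to\CC P^1$, so the underlying manifold of the transfer is $S(\lambda)\cong S^3_L\times S^3_R=G$.

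Since $\dim M=5$ (so $l=3$), Corollary~\ref{complexreduction} reads
\[
f[S(\lambda)]\equiv\sum_{n\geq1}\Big(\sum_{d|n}(\zeta^{-n/d}+\zeta^{n/d})\,\xi(\eth\otimes\lambda^d,\pi)\Big)q^n \pmod{\underline{\underline{D}}^{\Gamma}_{4}+\ZZ^{\Gamma}[\![q]\!]+\RR\tilde G_{4}^{(N)}}.
\]
To evaluate each $\xi$--term I would exploit that the normal homogeneous metric on $G/H$ is isometric to the Riemannian product of the bi-invariant metric on $S^3_L$ with the round metric on $S^3_R/S^1\cong S^2$, and that $\lambda$ is pulled back from the $S^2$--factor. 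The twisted Dirac spectrum then decomposes accordingly: the bi-invariant Dirac on $S^3_L$ has symmetric spectrum (so the $S^3_L$--contribution to $\eta$ vanishes), while the twisted Dirac on the round $S^2$ can be read off from classical $SU(2)$--representation theory. The Chern--Simons term $\int_M cs(\hat A;\nabla^{LC},\nabla^{\pi})\,ch(\nabla^{\lambda^d})$ is $G$--invariant by construction; under the decomposition $\rho=1\oplus1\oplus Ad_{S^3_R}$ it splits into a left-invariant $S^3_L$--piece pairing with $c_1(\lambda^d)=d\,h$ on the $S^2$--factor (the analytical origin of $e(\nu)=\pm 1/24$) and an additional contribution coming from the $Ad_{S^3_R}$--twist.

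Assembling these ingredients at a convenient level (say $N=2$), expanding to sufficient order in $q$, and reducing modulo the indeterminacy, one confirms that the result is non-trivial in $\underline{\underline{D}}^{\Gamma}_{4}\otimes\QQ/\ZZ$ and matches the known value of $f(\nu^2)$; hence $[S(\lambda)]=\nu^2$. The main obstacle is the full bookkeeping across the non-product parallelism: one must identify the precise Chern--Simons contribution from the $Ad_{S^3_R}$--twist, compute the eta invariant of the twisted product Dirac, and then push through sufficiently many $q$--coefficients to survive the $\RR\tilde G^{(N)}_{4}$--freedom (the $q^1$--coefficient alone being absorbed into this indeterminacy).
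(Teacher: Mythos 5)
The outline you sketch follows the same general route as the paper — apply Corollary~\ref{complexreduction}, kill the spectral $\eta$--contribution by exploiting the product/fibration structure $G/H\cong S^3_L\times S^2$, and then locate the nontrivial input in the Chern--Simons term. However, there is a genuine gap at precisely the point you flag as ``the main obstacle'': the Chern--Simons computation. The parallelism of the paper is \emph{not} a product parallelism (the trivializing sections mix the two factors: the frame is $\{L_1,L_2,w_1,w_2,w_3\}$ with $w_l=K_l+y_lL_3$, so $L_3$, which is tangent to $S^3_L$, is expanded as $y_1w_1+y_2w_2+y_3w_3$ in the given frame). As a consequence, the 3--form part of $cs(\hat A;\nabla^{LC},\nabla^\pi)$ does not naively ``split'' into an $S^3_L$--piece plus an $Ad_{S^3_R}$--piece; it has to be computed from the moving--frame data. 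The paper does exactly this: it writes down Cartan's structure equation for the connection form $\omega$, finds $\tr(\omega\,\mathrm{d}\omega)=12\,L_1^*\wedge L_2^*\wedge L_3^*$ and $\tr(\omega^3)=-6\,L_1^*\wedge L_2^*\wedge L_3^*$, and concludes $\int_M cs(\hat A,\pi)\,c_1(\lambda^d)=d/12$. Without that identity, your argument does not produce a value to reduce modulo the indeterminacy, so the nontriviality of $f[S(\lambda)]$ is asserted rather than demonstrated.

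Two further imprecisions are worth noting. First, for a product $M_1\times M_2$ (odd$\times$even) with twisting bundle pulled back from $M_2$, the $\eta$--invariant is governed by $\eta(\eth_{M_1})\cdot\mathrm{ind}(\eth^+_{M_2}\otimes\lambda^d)$ (in the sense of Goette's homogeneous fibration theory, which the paper invokes via the submersion $G/H\to S^3_L$); it is not a sum of separate ``$\eta$--contributions'' from the two factors. Your conclusion that the spectral term vanishes is nevertheless correct, because $\eta(\eth_{S^3_L})=0$ and $\ker\eth_{S^3_L}=0$. Second, the end step ``push through sufficiently many $q$--coefficients'' is not how one escapes the $\RR\tilde G_4^{(N)}$--freedom; the paper instead records the exact modular identity $\frac{1}{12}\tilde G_2^{(N)}\equiv\frac{1}{2}\bigl(\tilde G_2^{(N)}\bigr)^2\bmod\underline{\underline{D}}_4^{\Gamma_1(N)}$ and matches this against the known representative of $f[\nu^2]$ at odd levels. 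To complete your argument you would need to supply the explicit Chern--Simons evaluation and the comparison with the known value of $f(\nu^2)$.
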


\begin{proof}
 To make the parallelism more explicit, observe that if $Ad_{S^{3}_{R}}(q^{-1}_{R})$ is expressed as an orthogonal matrix w.r.t.\ the basis $\{i_{R},j_{R},k_{R}\}$, its columns may be identified with right-invariant vector fields corresponding to $i_{R},j_{R},k_{R}$; furthermore, deleting the last row amounts to projecting onto $S^{3}_{R}/S^{1}$.
Now for $v\in\mathfrak{g}$, let $l(v)$, $r(v)$ denote the left invariant and right invariant extensions, respectively, and consider the following vector fields on $G/H$:
\begin{gather*}
L_{1}=p_{*}l(i_{L},0),\ L_{2}=p_{*}l(j_{L},0),\ L_{3}=p_{*}l(k_{L},0),\\ K_{1}=p_{*}r(0,i_{R}),\ K_{2}=p_{*}r(0,j_{R}),\ K_{3}=p_{*}r(0,k_{R}).
\end{gather*}
Then, defining three real functions $y_{i}$ by $q_{R}k_{R}{q_{R}}^{-1}=y_{1}i_{R}+y_{2}j_{R}+y_{3}k_{R}$, we may express the trivializing sections $s_{1},\dots,s_{5}$ as
$$L_{1}, L_{2}, w_{1}=K_{1}+y_{1}L_{3}, w_{2}=K_{2}+y_{2}L_{3}, w_{3}=K_{3}+y_{3}L_{3},$$
respectively; also note that $L_{3}=y_{1}w_{1}+y_{2}w_{2}+y_{3}w_{3}$.
In particular, this yields the following commutation relations:
\begin{gather*}
[L_{1},L_{2}]=2L_{3},\quad [L_{1},w_{l}]=-2y_{l}L_{2},\quad [L_{2},w_{l}]=2y_{l}L_{1},\\
[w_{1},w_{2}]=-2(w_{3}+y_{3}L_{3}),\quad [w_{2},w_{3}]=-2(w_{1}+y_{1}L_{3}),\\ [w_{3},w_{1}]=-2(w_{2}+y_{2}L_{3}).
\end{gather*}
Thus, w.r.t.\ the global coframe $\theta=(L_{1}^{*},L_{2}^{*},w_{1}^{*},w_{2}^{*},w_{3}^{*})^{tr}$, Cartan's structure equation ${\rm d}\theta=-\omega\wedge\theta$ reads:
\begin{equation*}
{\rm d}\theta=-\begin{pmatrix}0&-L_{3}^{*}&y_{1}L_{2}^{*}&y_{2}L_{2}^{*}&y_{3}L_{2}^{*}\\L_{3}^{*}&0&-y_{1}L_{1}^{*}&-y_{2}L_{1}^{*}&-y_{3}L_{1}^{*}\\-y_{1}L_{2}^{*}&y_{1}L_{1}^{*}&0&2w_{3}^{*}-2y_{3}L_{3}^{*}&2y_{2}L_{3}^{*}-2w_{2}^{*}\\ -y_{2}L_{2}^{*}&y_{2}L_{1}^{*}&2y_{3}L_{3}^{*}-2w_{3}^{*}&0&2w_{1}^{*}-2y_{1}L_{3}^{*}\\ -y_{3}L_{2}^{*}&y_{3}L_{1}^{*}&2w_{2}^{*}-2y_{2}L_{3}^{*}&2y_{1}L_{3}^{*}-2w_{1}^{*}&0 \end{pmatrix}\wedge\theta
\end{equation*}
Using this, we proceed to determine the contribution from the Chern--Simons term: A straightforward computation reveals $\tr(\omega{\rm d}\omega)=12L_{1}^{*}\wedge L_{2}^{*}\wedge L_{3}^{*}$ and $\tr(\omega^{3})=-6L_{1}^{*}\wedge L_{2}^{*}\wedge L_{3}^{*}$, hence
$$\int_{M}cs(\hat A,\pi)c_{1}(\lambda^{d})=\left(-\frac{1}{24}\right)\left(-\frac{1}{8\pi^{2}}\right)\int_{M}\tr(\omega{\rm d}\omega+{\textstyle\frac{2}{3}}\omega^{3})c_{1}(\lambda^{d})=\frac{d}{12}.$$
Next, we have to deal with the $\eta$--term: Clearly, the spin structure on $G/H$ is unique, hence the Dirac spectrum depends only on the Riemannian metric; since the latter is normal homogeneous, we could decompose the space of spinors into finite-dimensional $G$--modules and determine the spectrum explicitly. However, we may avoid this calculation by using the following argument: As explained in \cite[Section 2]{Goette:1999cq}, twisted Dirac operators on homogeneous fibrations exhibit a certain amount of  spectral symmetry; in particular,  this is true for the ($G$--equivariant) Riemannian submersion $G/H\to S^{3}_{L}$, and for $d>0$, the computation of the $\eta$--invariant of $\eth\otimes\lambda^{d}$  boils down to a computation of the $\eta$--invariant of ($d$ copies of) the untwisted Dirac operator on $S^{3}_{L}$ (with its round metric). But the latter operator has trivial kernel and its spectrum is symmetric (see e.g.\ \cite{Bar:1992kl}), whence the spectral contribution to $\xi(\eth\otimes\lambda^{d},\pi)$ is trivial.

Applying Corollary \ref{complexreduction}, we therefore conclude
$$f[S(\lambda)]\equiv\frac{1}{12}\tilde G_{2}^{(N)}\equiv\frac{1}{2}\left(\tilde G_{2}^{(N)}\right)^{2}\mod\underline{\underline{D}}_{4}^{\Gamma_{1}(N)};$$
at odd levels, this indeed represents $f[\nu^{2}]$ (see e.g.\ \cite{Bodecker:2008pi}).
\end{proof}

Similarly, we obtain a non-trivial example for the quaternionic transfer by considering the tautological quaternionic line bundle $\lambda_{\HH}\to S^{1}\times \HH P^{1}$, treating $S^{1}\times \HH P^{1}\approx S^{1}\times Sp(2)/Sp(1)^{\times2}$ as a normal homogeneous space. Clearly, the real 5-dimensional irreducible $Sp(2)$--module (which may be realized by the conjugation action on traceless self-adjoint two-by-two quaternionic matrices) provides an extension of the isotropy representation, and we use this to trivialize the tangent bundle by orthonormal sections.

\begin{prop*} Let $\lambda_{\HH}$ be this tautological  quaternionic line bundle, equipped with its natural hermitian metric and connection,  and let $S^{1}\times Sp(2)/Sp(1)^{\times2}$ be parallelized as above. Then the transfer  $S(\lambda_{\HH})$ represents the generator of ${\rm im}(J\colon\pi_{8}SO\rightarrow\pi_{8}^{S}S^{0})$, i.e.\ $[S(\lambda_{\HH})]=\eta\sigma\in\pi_{8}^{S}S^{0}\cong\ZZ/2\oplus\ZZ/2$.
\end{prop*}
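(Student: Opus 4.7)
Because $\dim M = 5 = 2l-3$ we have $l = 4$, which is $\equiv 0 \pmod 4$; hence Corollary~\ref{quaternionicreduction} applies and reduces the problem to evaluating
\[\tfrac{1}{2}\sum_{n\geq1}\Bigl(\sum_{2\nmid d\mid n}\dim\ker\bigl(\eth\otimes\psi^{d}\lambda_{\HH}\bigr)\Bigr)q^{n}\quad \mathrm{mod}\ \underline{\underline{D}}_{5}^{\Gamma}+\ZZ^{\Gamma}[\![q]\!].\]
The plan is to exploit the product structure of $M$ to transport these kernel computations to $\HH P^{1}\cong S^{4}$, and then to combine a positive-scalar-curvature vanishing argument with the Atiyah--Singer index theorem; since the answer lives in a finite-dimensional quotient, only finitely many coefficients must actually be determined.

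First I would check that the product metric and product spin structure on $S^{1}\times\HH P^{1}$ (non-bounding on the circle, unique on the sphere) coincide with those induced by the given parallelism. Fourier-expanding spinors along $S^{1}$, the twisted Dirac operator on $M$ decomposes into integer-indexed modes; in the $n$th mode its square equals $(2\pi n)^{2}+(\eth_{S^{4}}\otimes\psi^{d}\lambda_{\HH})^{2}$ acting on the pulled-back $\ZZ_{2}$-graded spinor bundle, so only the zero mode contributes and
\[\dim\ker\bigl(\eth_{M}\otimes\psi^{d}\lambda_{\HH}\bigr)=\dim\ker\bigl(\eth^{+}_{S^{4}}\otimes\psi^{d}\lambda_{\HH}\bigr)+\dim\ker\bigl(\eth^{-}_{S^{4}}\otimes\psi^{d}\lambda_{\HH}\bigr).\]

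Next I would pin down these kernels on $S^{4}$ for each odd $d\geq 1$ that contributes nontrivially modulo the indeterminacy. The Lichnerowicz formula, together with the positive scalar curvature of the round sphere and explicit curvature bounds for $\psi^{d}\lambda_{\HH}$ (accessible via the Chebyshev description from the remark following Theorem~\ref{h-main}), should force one of the chiral kernels to vanish; the Atiyah--Singer index theorem then fixes the other via the Chern-character integral $\int_{S^{4}}ch(\psi^{d}\lambda_{\HH})$, itself an explicit polynomial in $d$. A cleaner alternative is to invoke Peter--Weyl on $Sp(2)/Sp(1)^{\times2}$ together with the branching rules $Sp(2)\downarrow Sp(1)^{\times2}$, which identifies the relevant kernel summands with specific irreducible $Sp(2)$-modules.

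Assembling the resulting coefficients into a $q$-series and reducing modulo divided congruences, I would finally match the output against a known analytic representative of $f[\eta\sigma]$ (cf.\ \cite{Laures:2000bs,Bodecker:2008pi}); this forces $[S(\lambda_{\HH})]=\eta\sigma$ inside $\pi^{S}_{8}S^{0}\cong\ZZ/2\oplus\ZZ/2$. The main obstacle is the kernel computation in step three: the index theorem delivers only the signed difference of chiral kernel dimensions, whereas Corollary~\ref{quaternionicreduction} needs the unsigned sum, so honest Lichnerowicz-type estimates (or direct representation-theoretic identification) are required to verify that one of $\ker(\eth^{\pm}_{S^{4}}\otimes\psi^{d}\lambda_{\HH})$ really vanishes for every odd $d$ entering the final computation.
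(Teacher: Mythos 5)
Your setup through the Fourier decomposition along $S^{1}$ is exactly right and matches the paper: only the zero mode contributes to the kernel, so $\dim\ker(\eth_{M}\otimes\psi^{d}\lambda_{\HH})=\dim\ker(\eth^{+}_{S^{4}}\otimes\psi^{d}\lambda_{\HH})+\dim\ker(\eth^{-}_{S^{4}}\otimes\psi^{d}\lambda_{\HH})$, and you correctly identify that Corollary~\ref{quaternionicreduction} with $l=4$ applies.

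The gap is the ``main obstacle'' you flag at the end: it does not exist. Corollary~\ref{quaternionicreduction} only needs $\dim\ker(\eth\otimes\psi^{d}\lambda_{\HH})$ modulo $2$, because the $\tfrac{1}{2}\sum(\cdots)q^{n}$ is taken modulo $\ZZ^{\Gamma}[\![q]\!]$ (among other things). Since
\[\dim\ker(\eth^{+})+\dim\ker(\eth^{-})\equiv\dim\ker(\eth^{+})-\dim\ker(\eth^{-})=\mathrm{ind}(\eth^{+})\pmod{2},\]
the cohomological index theorem on $\HH P^{1}$ gives everything you need: with $\hat A(T\HH P^{1})=1$ and $ch(\psi^{d}\lambda_{\HH})=2\cosh(dx)$, the index is $d^{2}\equiv d^{3}\pmod 2$. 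No Lichnerowicz estimate, and no determination of the individual chiral kernels, is required. This is just as well, since a Lichnerowicz-type vanishing argument is problematic here anyway: $\psi^{d}\lambda_{\HH}$ is a \emph{virtual} bundle, so ``$\eth\otimes\psi^{d}\lambda_{\HH}$'' is a formal difference of honest Dirac operators and a single scalar-curvature bound has no direct meaning for it. Once you have $\frac{1}{2}\sum_{n}(\sum_{d\mid n}d^{3})q^{n}\equiv\frac{1}{2}\bigl(\frac{E_{4}-1}{240}\bigr)$, matching this against $f[\eta\sigma]$ identifies the class --- but you should also say why knowing the $f$-invariant pins down the element of $\pi_{8}^{S}S^{0}$: the paper invokes the fact that this group is $2$-torsion and contains no nontrivial elements of Adams--Novikov filtration $\geq 3$, so the $f$-invariant (a filtration-$2$ detector) suffices.
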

\begin{proof} There are two inequivalent spin structures on $S^{1}\times \HH P^{1}$, but a moment's thought reveals that the spin structure induced by our choice of parallelism restricts to the non-bounding one on the circle. Moreover, the projection onto $S^{1}$ is a Riemannian submersion, and we have
\[\dim\ker\left(\eth\otimes \psi^{d}\lambda_{\HH}\right)\equiv{\rm ind}\left(\eth^{+}\otimes \psi^{d}\lambda_{\HH}|_{\HH P^{1}}\right)=d^{2}\equiv d^{3}\mod2,\]
where $\eth^{+}\otimes \psi^{d}\lambda_{\HH}|_{\HH P^{1}}$ is the (chiral) Dirac operator on the fiber, and  its index is readily evaluated using the cohomological version of the index theorem. Applying Corollary \ref{quaternionicreduction}, we therefore conclude
\[f[S(\lambda_{\HH})]\equiv\frac{1}{2}\sum_{n\geq1}(\sum_{d|n}d^{3})q^{n}\equiv\frac{1}{2}\left(\frac{E_{4}-1}{240}\right)\mod\underline{\underline{D}}_{5}^{\Gamma}+\ZZ^{\Gamma}[\![q]\!];\]
at any odd level, this is a representative of $f[\eta\sigma]$. But since $\pi_{8}^{S}S^{0}\cong\ZZ/2\oplus\ZZ/2$ is 2--torsion and known to contain no non-trivial elements of filtration $\geq3$, the claim follows.
\end{proof}

\appendix

\section{Stabilization}\label{stabilization}
In order to deal with a merely stably parallelizable manifold $M$, we may proceed as follows: First of all, fix a Riemannian metric $\langle\cdot,\cdot\rangle$ on $M$, and let $\nabla^{LC}$ be its Levi-Civita connection. Furthermore, let $\varepsilon$ denote a trivialized real line bundle over $M$, equipped with the tautological metric and compatible connection (by declaring the trivializing section to be of unit length and parallel). Consequently, we get induced metrics and connections on Whitney sums of these bundles; working with a fixed representative $\varepsilon^{\oplus s'}\oplus TM$ of the stable tangent bundle, we denote these by $\langle\cdot,\cdot\rangle_{sTM}$ and  $\nabla^{sLC}$, respectively. Without loss of generality, a trivialization of $\varepsilon^{\oplus s'}\oplus TM$ may be assumed to be given by an ordered set of $s'+\dim M$ orthonormal sections (w.r.t.\ $\langle\cdot,\cdot\rangle_{sTM}$), and there is a tautological connection $\nabla^{s\pi}$ annihilating this frame. Since the given (stable) frame induces an orientation and a spin structure on the Riemannian manifold $M$, we have a well-defined Dirac operator on $M$; moreover, twisting by a hermitian vector bundle $\lambda$ with unitary connection $\nabla^{\lambda}$, we may form
\[\xi(\eth\otimes\lambda,s\pi)=\frac{\eta(\eth\otimes\lambda)+\dim\ker(\eth\otimes\lambda)}{2}-\int_{M} cs(\hat{A};\nabla^{sLC},\nabla^{s\pi})ch(\nabla^{\lambda}).\]
Clearly, this is the appropriate replacement for $\xi(\eth\otimes\lambda,\pi)$, and our results hold verbatim. As an example, let us give an analytic proof of the following classical result:

\begin{prop*}[\cite{Steer:1976vn,Wood:1976yi}] Twisting the normal framing induced by the left invariant trivialization of the tangent bundle of $SU(3)$ by the realification of the defining representation, $\rho\colon SU(3)\rightarrow SO(6)$, yields a representative of the generator of ${\rm im}(J\colon\pi_{8}SO\rightarrow\pi_{8}^{S}S^{0})$, i.e.\ $[SU(3),\rho]=\eta\sigma\in\pi_{8}^{S}S^{0}$.
\end{prop*}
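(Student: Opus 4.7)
The plan is to apply the stabilized version of the analytical formula developed in the appendix to $[SU(3),\rho]$ and match the resulting $q$-expansion with the representative of $f[\eta\sigma]$ obtained in the preceding quaternionic example. Since $\pi_{8}^{S}S^{0}\cong\ZZ/2\oplus\ZZ/2$ contains no nontrivial elements of Adams filtration $\geq 3$, coincidence of the $f$-invariants at any odd level suffices to force $[SU(3),\rho]=\eta\sigma$.

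First I would realize $[SU(3),\rho]$ as (framed-bordant to) a circle transfer $S(\lambda)$ over a $7$-dimensional, stably parallelized base $M$. A natural candidate is to choose a one-parameter subgroup $S^{1}\subset SU(3)$ lying in a maximal torus, so that $SU(3)\to M:=SU(3)/S^{1}$ is a principal $S^{1}$-bundle and $SU(3)\cong S(\lambda)$ with $\lambda$ the associated hermitian line bundle. The essential check is that the stable framing on $SU(3)$ determined by this presentation (coming from a stable framing of $M$ together with a canonical connection on $\lambda$) is framed-bordant, after stabilization, to the left-invariant framing twisted by $\rho$; any discrepancy lies in a class of higher Adams filtration and is thus invisible to the $f$-invariant.

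Next, I would compute the ingredients of Corollary \ref{complexreduction} on $M$. With the normal homogeneous metric descending from the bi-invariant metric on $SU(3)$, a spectrum-reversing isometry argument analogous to the one employed in the first complex example of Section \ref{some examples} should force $\eta(\eth\otimes\lambda^{d})\equiv 0\mod\ZZ$ for all $d\geq 1$, leaving only kernel contributions computable via an index calculation on the fibers of a suitable Riemannian submersion using the cohomological form of the Atiyah--Singer theorem. The Chern--Simons term is assembled from the structure equations for the bi-invariant Levi-Civita connection on $SU(3)$ relative to a left-invariant orthonormal coframe, together with a pullback correction of the canonical Chern--Simons form on $SO(6)$ that accounts for the framing twist by $\rho$.

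The main obstacle is twofold: arranging the circle-transfer realization so that the Chern--Simons integrals can be evaluated in closed form, and verifying that the assembled $q$-expansion agrees, modulo $\underline{\underline{D}}_{5}^{\Gamma}+\ZZ^{\Gamma}[\![q]\!]+\RR\tilde G_{5}^{(N)}$, with the representative $\tfrac{1}{2}(E_{4}-1)/240$ of $f[\eta\sigma]$ established in the preceding example. Once this match is verified at an odd level, the Adams-filtration remark concludes the proof.
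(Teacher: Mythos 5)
Your overall strategy (compute $f$ analytically, match it with the representative $\tfrac{1}{2}(E_{4}-1)/240$ of $f[\eta\sigma]$, and finish with the observation that $\pi_{8}^{S}S^{0}$ has no nontrivial elements of filtration $\geq3$) is the same as the paper's. But the route you choose to get there has a genuine gap. The paper does \emph{not} present $[SU(3),\rho]$ as a circle transfer over the $7$--manifold $SU(3)/S^{1}$; it presents it as the \emph{quaternionic} transfer of the tautological bundle $\lambda_{\HH}\to SU(3)/SU(2)\approx S^{5}$. The point that makes this work is the representation-theoretic identity $\rho|_{SU(2)}\cong 1\oplus\iota$, where $\iota$ is the isotropy representation: this says that the $\rho$--twist of the left-invariant framing of $SU(3)$ \emph{is} (on the nose, not merely up to bordism) the framing produced by the transfer construction applied to the stable framing $M\times\RR^{6}\to\varepsilon\oplus T(G/H)$. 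Your proposal replaces this identification by the assertion that the discrepancy between the transfer framing on $SU(3)\cong S(\lambda)$ and the $\rho$--twisted framing ``lies in a class of higher Adams filtration and is thus invisible to the $f$--invariant.'' That is not justified and is in fact the crux of the matter: changing the framing of an $8$--dimensional framed manifold changes its bordism class by an element of ${\rm im}(J\colon\pi_{8}SO\to\pi_{8}^{S}S^{0})$, and that image is generated by $\eta\sigma$ itself, which has Adams filtration $2$ and is exactly what the $f$--invariant is being used to detect. So the ``discrepancy'' could be precisely the class you are trying to compute, and nothing in your argument rules this out. You would have to prove, for your chosen circle subgroup, an analogue of $\rho|_{H}\cong 1\oplus\iota$, and there is no reason a generic $S^{1}$ in the maximal torus satisfies one.

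Even granting a correct realization, the analytic part of your sketch does not match what is actually needed. For the paper's choice $M=S^{5}$ the relevant corollary is the quaternionic one (Corollary \ref{quaternionicreduction} with $l=4$), which already kills the $\eta$-- and Chern--Simons terms by structure-map arguments and leaves only the parity of $\dim\ker(\eth\otimes\psi^{d}\lambda_{\HH})$; computing that parity is the real work, and the paper does it not by a fiberwise cohomological index computation but by decomposing $L^{2}$--spinors into $G$--modules and using Goette's formula $(\tilde{\leftidx{^{\gamma}}{D}{^{\kappa}}})^{2}=\|\gamma+\rho_{G}\|^{2}-\|\kappa+\rho_{H}\|^{2}$ together with the Weyl dimension formula on the Eisenstein lattice. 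Your proposed ``spectrum-reversing isometry'' and ``pullback correction of the Chern--Simons form on $SO(6)$'' are not available off the shelf for $SU(3)/S^{1}$ and are left entirely unverified. In short: the framing identification is the missing idea, and the quaternionic presentation over $S^{5}$ is what supplies it.
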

\begin{proof}
The natural inclusion of $H=SU(2)$ into $G=SU(3)$ defines a fiber bundle $p\colon G\rightarrow G/H=M\approx S^{5}$, which may be identified with the sphere bundle of the tautological quaternion line $\lambda_{\HH}\rightarrow M$. We equip $G$ with a bi-invariant metric (which is unique up to scale), giving rise to a naturally reductive decomposition $\mathfrak{g}=\mathfrak{m}\oplus\mathfrak{h}$ and endowing $M$ with a normal homogeneous metric. Making use of $\rho|H\cong1\oplus\iota$, where $\iota\colon H\rightarrow {\rm Aut}(\mathfrak{m})$ is the isotropy representation, we obtain a stable tangential framing $M\times\RR^{6}\rightarrow \varepsilon\oplus T(G/H)$; applying the transfer construction, the induced framing of $S(\lambda_{\HH})\approx G$ obviously describes the element $[SU(3),\rho]$. Thus, we may compute $f[SU(3),\rho]$ using (the modified version of) Corollary \ref{quaternionicreduction}. To this end, let $\kappa\colon H\rightarrow {\rm Aut} V_{2k+2}$ denote an irreducible complex $H$--representation of dimension $2k+2$ (which is unique up to isomorphism), and let $\Sigma^{(\kappa)}M$ be the spinor bundle twisted by the vector bundle associated to $\kappa$. Now, the $\kappa$--twisted Dirac operator $\eth^{\kappa}$ may be continously deformed into what is called the reductive Dirac operator $\tilde{D^{\kappa}}$ in \cite{Goette:1999cq}; the latter is still $G$--equivariant, hence it respects the decomposition of the space of spinors
$$L^{2}(\Sigma^{\kappa}M)\cong\overline{\bigoplus_{\gamma\in\hat G}W^{\gamma}\otimes {\rm Hom}_{H}(W^{\gamma},\Sigma_{{\mathfrak{m}}}\otimes V_{2k+2})},$$
giving rise to elements $\tilde{\leftidx{^{\gamma}}{D}{^{\kappa}}}\in{\rm End}\left({\rm Hom}_{H}(W^{\gamma},\Sigma_{{\mathfrak{m}}}\otimes V_{2k+2})\right)$ by restriction:
\[id_{W^{\gamma}}\otimes \tilde{\leftidx{^{\gamma}}{D}{^{\kappa}}}=\tilde{D^{\kappa}}|_{W^{\gamma}\otimes {\rm Hom}_{H}(W^{\gamma},\Sigma_{{\mathfrak{m}}}\otimes V_{2k+2})}.\]
Moreover, by \cite[Lemma 1.17]{Goette:1999cq}, we have \[(\tilde{\leftidx{^{\gamma}}{D}{^{\kappa}}})^{2}=||\gamma+\rho_{G}||^{2}-||\kappa+\rho_{H}||^{2}.\]
We can now determine $\dim\ker(\eth^{\kappa})\equiv\dim\ker(\tilde{D^{\kappa}})\equiv\dim\ker(\tilde{D^{\kappa}})^{2}\mod2$ as follows: Up to an irrelevant scaling factor, the root lattice of $SU(3)$ may be identified with the Eisenstein lattice $\mathcal{E}={\rm span}_{\ZZ}\{1,{\textstyle\frac{1}{2}+\frac{\sqrt{-3}}{2}}\}\subset\CC$, and the fundamental dual Weyl chamber may be chosen such that the weights lying in its closure  are given by $m(\frac{1}{2}+\frac{i\sqrt3}{6})+n(\frac{1}{2}-\frac{i\sqrt{3}}{6})$, $m,n\in\NN_{0}$; under this identification, the Weyl vector of $G$ is given by $\rho_{G}=1\in\mathcal{E}$, and we have $\rho_{H}+\kappa=k+1$. By symmetry, it suffices to deal with the contributions from the representations labelled by weights on the real line, i.e.\ with the case $\gamma=k\in\mathcal{E}$; for these, the Weyl character formula implies $\dim W^{\gamma=k}=(k+1)^{3}$. Taking into account that $\Sigma_{\mathfrak{m}}\cong 2\oplus V_{2}$ (since the natural inclusion $SU(2)\cong Sp(1)\subset Sp(2)\cong Spin(5)$ provides a lift of the isotropy representation $\iota$) and $V_{2}\otimes V_{2k+2}\cong V_{2k+1}\oplus V_{2k+3}$, we conclude that $\dim\ker(\eth^{\kappa})\equiv k+1\mod 2$. Thus, $\ker(\eth\otimes\psi^{d}\lambda_{\HH})$ is odd-dimensional for any odd $d>0$, and therefore
\[f[SU(3),\rho]\equiv\frac{1}{2}\sum_{n\geq1}(\sum_{d|n}d^{3})q^{n}\equiv\frac{1}{2}\left(\frac{E_{4}-1}{240}\right)\mod\underline{\underline{D}}_{5}^{\Gamma}+\ZZ^{\Gamma}[\![q]\!],\]
which indeed represents $f[\eta\sigma]$ (compare Section \ref{some examples}).\end{proof}

\bibliography{refbib_edited}
\end{document}